\def\C{{\Bbb C}}
\def\P{{\Bbb P}}
\def\F{{\Bbb F}}
\newtheorem{defi}{D\'{e}finition}[section]
\newtheorem{pro}[defi]{Proposition}
\newtheorem{lem}[defi]{Lemme}
\newtheorem{thm}[defi]{Th\'{e}or\`{e}me}
\newtheorem{exa}[defi]{Exemple}
\newtheorem{rem}[defi]{Remarque}
\begin{document}
%%%%%%%%%%%%%%%%%%%%%%%%%%%%%%%%
\title{Fibr\'es  de Schwarzenberger\\
 et fibr\'es logarithmiques  g\'en\'eralis\'es}
\author{Jean Vall\`es}
\date{}

\maketitle
\begin{abstract} 
On propose  une g\'en\'eralisation   des 
fibr\'es  logarithmiques et des fibr\'es de Schwarzenberger 
 sur $\P^n=\P^n(\C)$ en rang plus grand que $n$. 
Les premiers sont associ\'es \`a des ensembles finis de points de $\P^{n\vee}$ et les seconds \`a des courbes de degr\'e plus grand que $n$ sur $\P^{n\vee}$. Sur le plan projectif, nous montrons que deux fibr\'es logarithmiques g\'en\'eralis\'es sont isomorphes si et seulement s'ils sont associ\'es au m\^eme ensemble de points ou bien si les deux ensembles de points sont sur une m\^eme courbe de degr\'e \'egal au rang des fibr\'es.

\smallskip

\begin{center}
\textbf{Abstract}
\end{center}
\hskip5mm We propose a generalization of logarithmic and Schwarzenberger bundles over $\P^n=\P^n(\C)$ when the rank is greater than $n$.
The first ones are associated to finite sets of points on $\P^{n\vee}$ and the second ones to curves with degree greater than $n$ on $\P^{n\vee}$. On the projective plane we show that two logarithmic bundles are isomorphic if and only if they are associated to the same set of points or if the two sets of points belong to a curve of degree equal to the rank of the considered bundles.

\end{abstract}
{\small Mots cl\'es : fibr\'es logarithmiques g\'en\'eralis\'es, hyperplans instables, vari\'et\'e d'incidence.}
\section{Introduction}
Les fibr\'es de Schwarzenberger (introduits par Schwarzenberger \cite{S}) et les fibr\'es logarithmiques (introduits en toute g\'en\'eralit\'e  par Deligne  \cite{De} et \'etudi\'es sur $\P^n$  par Dolgachev et Kapranov \cite{DK}), poss\'edant une g\'eom\'etrie tr\`es riche,  ont fait et font encore l'objet de nombreux travaux. Ce sont des fibr\'es vectoriels  de rang $n$ sur $\P^n$. Dans cet article nous en proposons  une g\'en\'eralisation  ainsi qu'une \'etude en rang plus grand que $n$.

\smallskip

Nous rappelons pour commencer (section \ref{rappel}), les  d\'efinitions des fibr\'es de Schwarzenberger, des fibr\'es logarithmiques et  le th\'eor\`eme ``de type Torelli'' qui fait le lien entre les deux. Ce th\'eor\`eme a \'et\'e  \'enonc\'e et prouv\'e par Dolgachev et Kapranov (\cite{DK}, thm. 7.2). 

\smallskip

\`A un groupe de point $Z$ de $\P^{n\vee}$ en position lin\'eaire g\'en\'erale, on associe un fibr\'e de rang $n$ sur $\P^n$, appel\'e fibr\'e logarithmique. Nous montrons (section \ref{premiere}) que ce fibr\'e vectoriel est l'image directe sur $\P^{n}$ de l'image inverse du faisceau d'id\'eaux $\mathcal{J}_Z(1)$ sur la vari\'et\'e d'incidence points-hyperplans de $\P^{n}$  (prop. \ref{ext}). Ceci
donne  une description globale des fibr\'es logarithmiques en fonction du groupe de points $Z$. Qui plus est, cette description globale apporte avec elle une g\'en\'eralisation naturelle des fibr\'es logarithmiques en rang plus grand que $n$ (en consid\'erant les faisceaux $\mathcal{J}_Z(r+1)$, pour $r\ge 0$). 

\smallskip

Les fibr\'es de Schwarzenberger de rang $n$ sur $\P^n$ sont associ\'es aux courbes rationnelles normales (donc aux courbes non d\'eg\'en\'er\'ees de degr\'e $n$) de $\P^{n\vee}$. Plus g\'en\'eralement, nous associons des fibr\'es de Steiner de rang  $n+r$ (avec $r\ge 0$)   \`a des courbes de degr\'es $n+r$ sur $\P^{n\vee}$ (section \ref{seconde}).

\smallskip

Nous revenons ensuite (section \ref{instables}) sur la d\'efinition et caract\'erisation cohomologique des hyperplans instables d'un fibr\'e de Steiner (lorsque le rang \'egale $n$, voir \cite{AO} ou \cite{Va}). En particulier, nous \'etudions les hyperplans instables des logarithmiques et Schwarzenberger  g\'en\'eralis\'es. 

\smallskip

Enfin, dans la derni\`ere partie (section \ref{plan}), nous montrons (th\'eor\`eme \ref{teo}) un th\'eor\`eme de ``type Torelli'' qui g\'en\'eralise sur le plan projectif celui de Dolgachev et Kapranov. Cette  g\'en\'eralisation dit en substance :

\smallskip

{\it  Soient $Z$ et $Z^{'}$ deux groupes de points de $\P^{2\vee}$ tels que  $E_{r+1}(Z)\simeq E_{r+}(Z^{'})$ (o\`u $E_{r+1}(Z)$  d\'esigne le fibr\'e logarithmique g\'en\'eralis\'e associ\'e \`a $Z$). Alors un des deux cas  suivants se produit :\\
1) $Z=Z^{'}.$\\
2) $Z$ et $Z^{'}$ sont sur une m\^eme courbe  $X_{r+2}$ de degr\'e $r+2$ et il existe un faisceau $\mathcal{L}$ de rang $1$ sur $X_{r+2}$ tel que $E_{r+1}(Z)\simeq E(X_{r+2}, \mathcal{L})$ (o\`u $E(X_{r+2}, \mathcal{L})$  d\'esigne un fibr\'e de Schwarzenberger g\'en\'eralis\'e).}

\smallskip

{\small Ce travail a \'et\'e r\'ealis\'e lors de mon s\'ejour dans les locaux  de l'universit\'e Ulisse Dini de Florence, que je remercie pour son hospitalit\'e. Ce s\'ejour fut partiellement financ\'e par GNSAGA-INdAM (Italie) et principalement par le CNRS (France) qui m'accueillait en d\'el\'egation.}

\section{Rappels}
\label{rappel}
Avant toute g\'en\'eralisation, nous rappelons dans cette section les d\'efinitions des fibr\'es logarithmiques, de Steiner et de Schwarzenberger. Pour aller plus loin, les r\'ef\'erences principales sont les articles
\cite{DK} et \cite{S}.

\smallskip

\'Etant donn\'es  une vari\'et\'e projective lisse $X$ et un diviseur \`a croisement normaux $D$ sur  $X$, on  d\'efinit le fibr\'e $\Omega_X(\mathrm{log}(D))$ des $1$-formes diff\'erentielles \`a p\^oles au plus logarithmiques le long de $D$. Lorsque $X$ est l'espace projectif complexe $\P^n$ et $D=\cup_{i=1}^{k} H_i$ une union d'hyperplans
en position lin\'eaire g\'en\'erale, ces fibr\'es sont d\'efinis (par Deligne dans l'article \cite{De}) comme  l'extension particuli\`ere qui suit :
$$0\rightarrow \Omega_{\P^n} \longrightarrow \Omega_{\P^n}(\mathrm{log}(D))\stackrel{\mathrm{res}}\longrightarrow 
\oplus_{i=1}^{k}O_{H_i} \rightarrow 0.$$
L'application $\mathrm{res}$ \'etant donn\'ee localement par  l'application r\'esidu de Poincar\'e (pour une d\'efinition explicite voir \cite{DK}, prop. 2.3). 

\smallskip

Dans la suite du texte on pr\'ef\`erera la notation ``duale'' :
\begin{defi} Soient $ H_1, \cdots, H_k$ des hyperplans de $\P^n$ en position lin\'eaire g\'en\'erale et 
$Z=\lbrace H^{\vee}_1, \cdots, H^{\vee}_k\rbrace \subset \P^{n\vee}$
l'ensemble des points duaux.
On appelle  fibr\'e logarithmique associ\'e \`a $Z$, le fibr\'e 
$$E(Z) :=\Omega_{\P^n}(\mathrm{log}(\cup_{i=1}^{k} H_i)).$$
\end{defi}
Lorsque le cardinal  de $Z\subset \P^{n\vee}$ est plus grand que $n+2$, le fibr\'e logarithmique $E(Z)$  
appartient \`a une famille plus large de fibr\'es, appel\'es fibr\'es de Steiner, dont nous rappelons  la d\'efinition.
\begin{defi}
Soient $n,m,r$ trois entiers avec $n>0$, $r\ge 0$ et $m>0$. On note  ${\cal FS}_{n,n+r,m}$ (resp. ${\cal S}_{n,n+r,m}$) l'ensemble des faisceaux coh\'erents $E$ (resp. des fibr\'es vectoriels)  admettant une r\'esolution du type :
$$\begin{CD}
0 @>>> O_{\P^n}^m(-1) @>>> O_{\P^n}^{m+n+r} @>>> 
E @>>> 0.
\end{CD}
$$
On les appellera \textit{faisceaux  de  Steiner} (resp. \textit{fibr\'es de  Steiner}).
\end{defi}
Par exemple,  lorsque $Z\subset \P^{n\vee}$ est un groupe de points en position lin\'eaire g\'en\'erale de cardinal $k\ge n+2$, Dolgachev et Kapranov ont montr\'e (\cite{DK}, thm. 3.5) que $E(Z)\in {\cal S}_{n,n, k-n-1}$ (pour $k=n+2$, c'est le fibr\'e tangent $T_{\P^n}(-1)$).
 
\smallskip

Les fibr\'es de Schwarzenberger, introduits par Schwarzenberger (\cite{S}, thm. 4), sont eux aussi des fibr\'es de Steiner de rang $n$ sur $\P^n$. De plus, ils sont compl\`etement d\'efinis par la donn\'ee d'un faisceau inversible sur la courbe rationnelle normale 
$C_n\subset \P^{n\vee}$. Ils h\'eritent ainsi d'une g\'eom\'etrie tr\`es riche, provenant des nombreuses propri\'et\'es des courbes rationnelles (voir par exemple \cite{AO}, \cite{DK} et \cite{S}). Pour les d\'efinir on consid\`ere  un espace vectoriel $U$ de dimension deux sur $\C$. On note  $S_i=\mathrm{Sym}^iU$ les puissances sym\'etriques de $U$, 
$(X_0,\cdots, X_n)$ les coordonn\'ees de $\P^n=\P(S_n)$ et $C_n\subset \P^{n\vee}=\P(S_n^{\vee})$ l'image de 
$\P(S_1^{\vee})$ par le morphisme de Veronese. 
\begin{defi}[\cite{S}, prop. 2]
Pour tout entier $m$ tel que $m\ge n $,  le fibr\'e  de Steiner  d\'efini par la suite exacte suivante :
$$ \begin{CD}
0 @>>> S_{m-n}\otimes O_{\P(S_n)}(-1) @>M>>S_{m}\otimes O_{\P(S_n)}
@>>> E_{m}(C_n)
@>>>0
\end{CD}
$$
o\`u 
$$  {}^tM=\left (
             \begin{array}{cccccccc}
              X_0 & X_1 & \cdots  & X_n & 0 & \cdots  & \cdots &0\\
              0 & X_0 & X_1 &\cdots      &X_n & 0 & & \vdots \\
               \vdots&  0  &  X_0 & X_1 &\cdots      &X_n & \ddots & \vdots   \\
               \vdots &   & \ddots &\ddots & \ddots&  &\ddots&0\\
                0&\cdots  & \cdots &  0 &X_0 & X_1 & \cdots  & X_n
             \end{array}
      \right ).
$$
est appel\'e 
\textit{fibr\'e de Schwarzenberger associ\'e \`a la courbe} $C_n$.
\end{defi}
Lorsque un groupe de points $Z$ de longueur $k\ge n+2$ appartient \`a une courbe rationnelle normale $C_n\subset  \P^{n\vee}$,
le fibr\'e logarithmique $E(Z)$ est  un fibr\'e de Schwarzenberger. Alors, la correspondance 
$Z \rightsquigarrow E(Z)$ n'est pas injective. R\'eciproquement, si la correspondance n'est pas injective alors le groupe de points $Z$ appartient \`a une courbe rationnelle normale. Ce r\'esultat d'injectivit\'e, que nous \'enon\c{c}ons plus formellement ci-dessous, a \'et\'e  prouv\'e par Dolgachev et Kapranov (\cite{DK}, thm. 7.2 pour $k\ge 2n+3$, et 
\cite{Va}, cor. 3.1 pour $k\ge n+2$, en \'etudiant les hyperplans instables de $E(Z)$).
\begin{thm}
Soient $Z$ et $Z^{'}$ deux groupes de points de longueur $k\ge n+2$ en position lin\'eaire g\'en\'erale dans $\P^{n\vee}$ pour lesquels, on a 
$E(Z)\simeq E(Z^{'})$. Alors, un des deux cas suivants se produit :\\
1) $Z=Z^{'}$.\\
2) $Z$ et $Z^{'}$ sont sur une m\^eme courbe rationnelle normale $C_{n}$ et $E(Z)\simeq E_{k-2}(C_n)$.
\end{thm}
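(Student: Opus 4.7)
Le plan est d'introduire et d'exploiter la notion d'\emph{hyperplan instable} d'un fibré de Steiner, qui sera développée systématiquement à la section \ref{instables}. Pour un fibré de Steiner $E$ de rang $n$ sur $\P^n$, j'appellerai hyperplan instable tout $H\subset \P^n$ tel que la restriction $E|_H$ admet un scindage exceptionnel, condition détectée cohomologiquement (typiquement par $h^0(H, E^\vee(1)|_H)>0$, ou de manière équivalente par la non-nullité de $h^1(H, E(-1)|_H)$). On note $W(E)\subset \P^{n\vee}$ le lieu des points duaux de ces hyperplans. Étant défini cohomologiquement, cet ensemble ne dépend que de la classe d'isomorphisme de $E$.

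Première étape : montrer l'inclusion $Z\subset W(E(Z))$. Elle résulte de la suite exacte résiduelle
$$0\to \Omega_{\P^n}\to E(Z)\to \bigoplus_{i=1}^k O_{H_i}\to 0,$$
qui, restreinte à chaque hyperplan $H_i$, exhibe $O_{H_i}$ comme facteur direct de $E(Z)|_{H_i}$ et fournit la section exceptionnelle requise, donc $H_i^\vee\in W(E(Z))$.

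Deuxième étape, qui concentre la difficulté : établir la dichotomie suivante pour $W(E(Z))$. Ou bien $W(E(Z))=Z$, auquel cas l'ensemble $Z$ est intrinsèquement récupéré à partir de $E(Z)$ ; ou bien $W(E(Z))$ contient strictement $Z$, et c'est alors nécessairement une courbe rationnelle normale $C_n\subset \P^{n\vee}$ contenant $Z$, auquel cas $E(Z)\simeq E_{k-2}(C_n)$. L'obstacle principal consiste à prouver qu'un hyperplan instable supplémentaire $H^\vee\notin Z$ contraint les $k$ points de $Z$ à reposer sur une courbe rationnelle normale. L'idée est d'analyser la matrice de présentation $M$ du fibré de Steiner $E(Z)$ : un hyperplan instable $[v]$ correspond précisément à un point où $M(v)$ chute de rang d'une façon ``anormale''. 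Lorsque le lieu de ces dégénérescences est trop grand (au-delà des $k$ points de $Z$), une étude des mineurs maximaux de $M$ force cette matrice à prendre la forme de Toeplitz caractéristique des fibrés de Schwarzenberger, ce qui impose l'appartenance de $Z$ à une courbe rationnelle normale $C_n$ et l'identification $E(Z)\simeq E_{k-2}(C_n)$.

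Conclusion : l'hypothèse $E(Z)\simeq E(Z')$ entraîne l'égalité $W(E(Z))=W(E(Z'))$. Dans le premier cas de la dichotomie, $Z = W(E(Z)) = W(E(Z')) = Z'$, ce qui fournit le cas 1). Dans le second, $Z\cup Z'\subset C_n$, les deux fibrés coïncident avec $E_{k-2}(C_n)$, et l'on est dans le cas 2).
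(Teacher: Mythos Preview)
Ce th\'eor\`eme n'est pas d\'emontr\'e dans l'article : il est \'enonc\'e dans la section~\ref{rappel} comme un r\'esultat connu, avec renvoi \`a \cite{DK} (pour $k\ge 2n+3$) et \`a \cite{Va} (pour $k\ge n+2$, via les hyperplans instables). L'article d\'emontre ensuite une g\'en\'eralisation sur $\P^2$ (th\'eor\`eme~\ref{teo}), mais pas l'\'enonc\'e classique sur $\P^n$. Il n'y a donc pas de preuve dans le papier \`a laquelle comparer ta proposition.

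Cela dit, ton sch\'ema s'aligne bien sur l'approche de \cite{Va} que l'article cite, et la structure en deux \'etapes (inclusion $Z\subset W(E(Z))$, puis dichotomie) est la bonne. Quelques points m\'eritent toutefois correction. D'abord, la condition d'instabilit\'e utilis\'ee dans l'article est $h^0(E^{\vee}|_H)\neq 0$ (\'equivalente par dualit\'e de Serre \`a $h^{n-1}(E|_H(-n))\neq 0$), et non $h^0(E^{\vee}(1)|_H)>0$ comme tu l'\'ecris ; ce d\'ecalage de torsion change le lieu $W(E)$. Ensuite, dans ta premi\`ere \'etape, la suite r\'esiduelle restreinte \`a $H_i$ fournit une surjection $E(Z)|_{H_i}\to O_{H_i}$, donc une section non nulle de $E(Z)^{\vee}|_{H_i}$, mais pas un facteur direct en g\'en\'eral ; l'argument reste valable une fois reformul\'e ainsi.

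La vraie lacune est dans ta deuxi\`eme \'etape. L'affirmation qu'une \'etude des mineurs maximaux de $M$ \og force la forme de Toeplitz\fg{} n'est pas un argument : c'est pr\'ecis\'ement l\`a que r\'eside toute la difficult\'e, et ce n'est pas ainsi que proc\`ede \cite{Va}. Le m\'ecanisme effectif, que l'article reprend et g\'en\'eralise pour $\P^2$ dans la proposition~\ref{prop-cle}, est cohomologique : l'existence d'un hyperplan instable suppl\'ementaire $H^{\vee}\notin Z$ se traduit, via la vari\'et\'e d'incidence, par la non-nullit\'e d'un $h^0$ d'un faisceau d'id\'eaux convenablement tordu, ce qui impose \`a $Z\cup\{H^{\vee}\}$ d'\^etre contenu dans une courbe de bas degr\'e. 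Sur $\P^n$, cela devient une courbe rationnelle normale, mais ce passage demande un argument sp\'ecifique (bornes sur le nombre d'hyperplans instables, ou lemme de type s\'ecante comme le lemme~\ref{secdroite} pour $\P^2$) que ta proposition n'esquisse pas. Sans cette pi\`ece, la dichotomie reste une affirmation plut\^ot qu'une preuve.
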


%%%%%%%%%%%%%%%%%%%%%%%%%%%%%%%%%%%%%%%%%%
\section{Construction \'el\'ementaire I}
%%%%%%%%%%%%%%%%%%%%%%%%%%%%%%%%%%%%%%%%%%
\label{premiere}
Consid\'erons la vari\'et\'e d'incidence points-hyperplans de $\P^{n}$ :
$$
\begin{CD}
 \F @>q>>    \P^{n \vee}\\
 @VpVV \\
\P^{n}
\end{CD}
$$
Les points de $\P^n$ et $\P^{n\vee}$ seront not\'es $x$ et $H^{\vee}$ (resp. $x$ et $l^{\vee}$ pour $n=2$), $H$ et $x^{\vee}$ (resp. $l$ et $x^{\vee}$ pour $n=2$) d\'esignerons les hyperplans de $\P^n$ et $\P^{n\vee}$.
 On dira qu'un groupe de points $Z$
est en position $(r+1)$-g\'en\'erale
dans $\P^{n\vee}$ s'il n'est pas contenu dans une hypersurface de degr\'e $r+1$ (en particulier il est de longueur $\ge \binom{n+r+1}{n}$) et si chaque sous-groupe de longueur $s\le \binom{n+r}{n-1}$ contenu dans un hyperplan $x^{\vee}$, impose $s$ conditions lin\'eairement ind\'ependantes sur les hypersurfaces de degr\'e $(r+1)$ de $x^{\vee}$.
En d'autres termes, la dimension $h^0(\mathcal{J}_Z(r+1)\otimes O_{x^{\vee}})$ est constante pour tout $x^{\vee}$. Cette hypoth\`ese ad hoc correspond \`a la condition de position lin\'eaire g\'en\'erale (requise pour les fibr\'es logarithmiques) lorsque $r=0$.
\begin{pro}
\label{pro31}
Soient  $Z \subset \P^{n\vee}$ un groupe de points en position $(r+1)$-g\'en\'erale
et $\mathcal{J}_Z$ son faisceau d'id\'eaux. Alors, 
$ [p_*q^*\mathcal{J}_Z(r+1)]^{\vee}(-1) $ est un fibr\'e de Steiner de rang $\binom{n+r}{n-1}$ sur $\P^n$.
\end{pro}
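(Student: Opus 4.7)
The plan is to realize $F := p_*q^*\mathcal{J}_Z(r+1)$ as the kernel of a surjection between two trivial sheaves (up to twist) on $\P^n$, and then to dualize and twist by $O(-1)$ to obtain a Steiner resolution of $E := F^\vee(-1)$. To this end, I would tensor the Koszul resolution of the incidence divisor $\F \subset \P^n \times \P^{n\vee}$,
$$0 \to O_{\P^n \times \P^{n\vee}}(-1,-1) \to O_{\P^n \times \P^{n\vee}} \to O_\F \to 0,$$
with $\pi_2^*\mathcal{J}_Z(r+1)$. Since $\pi_2^*\mathcal{J}_Z$ is torsion-free, exactness is preserved, giving
$$0 \to \pi_1^*O(-1) \otimes \pi_2^*\mathcal{J}_Z(r) \to \pi_2^*\mathcal{J}_Z(r+1) \to \iota_* q^*\mathcal{J}_Z(r+1) \to 0,$$
where $\iota : \F \hookrightarrow \P^n \times \P^{n\vee}$. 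Pushing forward by $\pi_1$ and applying projection formula and K\"unneth, the two leftmost pushforwards become $H^\bullet(\P^{n\vee}, \mathcal{J}_Z(r)) \otimes O_{\P^n}(-1)$ and $H^\bullet(\P^{n\vee}, \mathcal{J}_Z(r+1)) \otimes O_{\P^n}$, while the rightmost yields $R^\bullet p_* q^*\mathcal{J}_Z(r+1)$.

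Next I would collect the cohomological vanishings needed to reduce the resulting long exact sequence to a short one. Since $Z$ lies on no hypersurface of degree $r+1$, and a fortiori none of degree $r$ (multiplying by any linear form would produce a degree-$(r+1)$ hypersurface through $Z$), one has $h^0(\mathcal{J}_Z(r+1)) = h^0(\mathcal{J}_Z(r)) = 0$; the vanishings $H^{\ge 2}(\P^{n\vee}, \mathcal{J}_Z(k)) = 0$ for $k \ge 0$ follow from the structure sequence $0 \to \mathcal{J}_Z(k) \to O(k) \to O_Z(k) \to 0$. The critical step is $R^1 p_* q^*\mathcal{J}_Z(r+1) = 0$: pushing down the pulled-back sequence $0 \to q^*\mathcal{J}_Z(r+1) \to q^*O(r+1) \to q^*O_Z(r+1) \to 0$ and using $R^1 p_* q^*O(r+1) = 0$ (fiberwise $H^1(\P^{n-1}, O(r+1)) = 0$), the vanishing reduces to the surjectivity, at every $x \in \P^n$, of the evaluation map $H^0(x^\vee, O(r+1)) \to \bigoplus_{z \in Z \cap x^\vee} \C$. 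This is precisely the content of the $(r+1)$-general position hypothesis.

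Putting these together, the long exact sequence collapses to
$$0 \to F \to H^1(\mathcal{J}_Z(r)) \otimes O_{\P^n}(-1) \to H^1(\mathcal{J}_Z(r+1)) \otimes O_{\P^n} \to 0.$$
Being the kernel of a surjection between locally free sheaves, $F$ is itself locally free; dualizing and twisting by $O(-1)$ then produces the Steiner presentation
$$0 \to H^1(\mathcal{J}_Z(r+1))^\vee \otimes O_{\P^n}(-1) \to H^1(\mathcal{J}_Z(r))^\vee \otimes O_{\P^n} \to E \to 0.$$
Using $h^1(\mathcal{J}_Z(k)) = |Z| - \binom{n+k}{n}$, the rank of $E$ equals $\binom{n+r+1}{n} - \binom{n+r}{n} = \binom{n+r}{n-1}$. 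The main obstacle is the cohomological reformulation of the $(r+1)$-general position hypothesis as the vanishing $R^1 p_* q^*\mathcal{J}_Z(r+1) = 0$; once this is in hand, the remainder is a formal consequence of the Koszul resolution and the projection formula.
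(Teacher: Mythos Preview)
Your proof is correct and follows essentially the same route as the paper: tensor the Koszul resolution of $\F\subset\P^n\times\P^{n\vee}$ by $pr_2^*\mathcal{J}_Z(r+1)$, push down to $\P^n$, and use the $(r+1)$-general position hypothesis together with $h^0(\mathcal{J}_Z(r+1))=0$ to collapse the long exact sequence. The only cosmetic differences are that the paper computes the rank directly as the constant fibre dimension $h^0(O_{x^\vee}(r+1))=\binom{n+r}{n-1}$ (invoking the definition of $(r+1)$-general position as constancy of $h^0(\mathcal{J}_Z(r+1)\otimes O_{x^\vee})$, which via Grauert also gives $R^1p_*q^*\mathcal{J}_Z(r+1)=0$), whereas you obtain it from the difference $h^1(\mathcal{J}_Z(r))-h^1(\mathcal{J}_Z(r+1))$ and deduce $R^1=0$ from the equivalent surjectivity of the evaluation map on each $x^\vee$.
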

\begin{proof}
Soit  $x^{\vee}$ un hyperplan de $\P^{n\vee}$. On note $ \vert Z\cap x^{\vee} \vert$ le nombre de points  de l'intersection et  $\mathcal{J}_{Z\cap x^{\vee}}$ le faisceau d'id\'eaux de $Z\cap x^{\vee}$ dans 
$x^{\vee}$. On a alors :
\begin{equation}
\label{eq}
 \mathcal{J}_{Z}(r+1)\otimes O_{x^{\vee}}=O_{Z\cap x^{\vee}} \oplus \mathcal{J}_{Z\cap x^{\vee}}(r+1).
\end{equation}
Comme  $Z$ est en position $(r+1)$-g\'en\'erale, la dimension $h^0(\mathcal{J}_Z(r+1)\otimes O_{x^{\vee}})$ est constante pour tout $x^{\vee}$. En particulier, lorsque $x^{\vee}$ ne coupe pas $Z$, l'\'egalit\'e ci-dessus donne, 
$$h^0(\mathcal{J}_{Z}(r+1)\otimes O_{x^{\vee}})=h^0(O_{x^{\vee}}(r+1))=\binom{n+r}{n-1}. $$ 
Autrement dit, 
$p_{*}q^{*}{\mathcal J}_{Z}(r+1)$ est un fibr\'e sur $\P^n$ de rang $\binom{n+r}{n-1}$ et $R^1p_{*}q^{*}{\mathcal J}_{Z}(r+1)=0$.

\smallskip

Afin de montrer que $p_{*}q^{*}{\mathcal J}_{Z}(r+1)$ est un fibr\'e de Steiner, nous consid\'erons maintenant  la r\'esolution minimale de $\F$ dans le produit $\P^n\times \P^{n\vee}$ 
$$\begin{CD}
0 @>>> O_{\P^n\times \P^{n\vee}}(-1,-1)@>>> O_{\P^n\times \P^{n\vee}} @>>>  O_{\F} @>>> 0.
\end{CD}
$$
On tensorise cette suite  par $pr_2^*\mathcal{J}_Z(r+1)$ (o\`u $pr_2$ est la projection de $\P^n\times \P^{n\vee}$ sur le second facteur) et on prend son image directe sur $\P^n$. Comme $h^0(\mathcal{J}_Z(r+1))=0$
et $R^1p_{*}q^{*}{\mathcal J}_{Z}(r+1)=0$, on a bien
{\small
$$\begin{CD}
0 @>>> p_{*}q^{*}{\mathcal J}_{Z}(r+1)@>>> H^{1}({\mathcal J}_{Z}(r))\otimes O_{\P^{n}}(-1) @>>> H^{1}({\mathcal J}_{Z}(r+1))\otimes O_{\P^{n}} @>>> 0.
\end{CD}
$$}
\end{proof}
\begin{pro}
\label{ext}
Soient  $Z=\lbrace H_1^{\vee},\cdots, H_k^{\vee} \rbrace \subset \P^{n\vee}$ un groupe de points en position $(r+1)$-g\'en\'erale
et $E$ un fibr\'e vectoriel apparaissant dans une extension  
$$\begin{CD}
0  @>>> (S^{r+1}\Omega_{\P^n})(r) @>>> E @>>> \oplus_{i=1}^{k}O_{H_i}
@>>> 0.
\end{CD}
$$
Alors, $E\simeq [p_*q^*\mathcal{J}_Z(r+1)]^{\vee}(-1).$
\end{pro}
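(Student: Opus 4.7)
La strat\'egie comporte deux temps~: exhiber une extension de la forme voulue pour le fibr\'e $F:=[p_{*}q^{*}\mathcal{J}_Z(r+1)]^{\vee}(-1)$, puis prouver que, dans la cat\'egorie des fibr\'es vectoriels, une telle extension d\'etermine son terme central \`a isomorphisme pr\`es. Pour la premi\`ere \'etape, j'appliquerais le foncteur $p_{*}q^{*}$ \`a la suite tautologique $0 \to \mathcal{J}_Z(r+1) \to O_{\P^{n\vee}}(r+1) \to O_Z \to 0$. L'hypoth\`ese de position $(r+1)$-g\'en\'erale donne $R^{1}p_{*}q^{*}\mathcal{J}_Z(r+1)=0$ (cf.~proposition \ref{pro31})~; par ailleurs, tordre la r\'esolution de Koszul de $\F$ dans $\P^n\times\P^{n\vee}$ par $O(0,r+1)$ et la pousser sur $\P^n$ fournit l'identification $p_{*}q^{*}O_{\P^{n\vee}}(r+1)=S^{r+1}(T_{\P^n}(-1))$, qui se lit aussi comme la suite d'Euler sym\'etris\'ee $0\to S^{r}V\otimes O(-1)\to S^{r+1}V\otimes O\to S^{r+1}(T_{\P^n}(-1))\to 0$. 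Enfin, comme $q^{-1}(H_i^{\vee})$ s'identifie via $p$ \`a $H_i\subset\P^n$, on a $p_{*}q^{*}O_Z=\oplus_{i=1}^{k} O_{H_i}$, d'o\`u
$$0 \to p_{*}q^{*}\mathcal{J}_Z(r+1) \to S^{r+1}(T_{\P^n}(-1)) \to \oplus_{i=1}^{k} O_{H_i}\to 0.$$
En dualisant par $\mathcal{H}om(-,O_{\P^n})$, en utilisant $\mathcal{E}xt^{1}(O_{H_i},O_{\P^n})=O_{H_i}(1)$, puis en tordant par $O_{\P^n}(-1)$ (avec $[S^{r+1}(T_{\P^n}(-1))]^{\vee}(-1)=(S^{r+1}\Omega_{\P^n})(r)$), on retrouve l'extension
$$0 \to (S^{r+1}\Omega_{\P^n})(r) \to F \to \oplus_{i=1}^{k} O_{H_i}\to 0.$$

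Pour l'unicit\'e, je passerais par le calcul de $\mathrm{Ext}^{1}(\oplus O_{H_i},(S^{r+1}\Omega_{\P^n})(r))$ via la suite spectrale locale-globale. Comme $\mathcal{H}om(O_{H_i},G)=0$ et $\mathcal{E}xt^{1}(O_{H_i},G)=G(1)|_{H_i}$ pour tout fibr\'e localement libre $G$, on obtient $\mathrm{Ext}^{1}(O_{H_i},(S^{r+1}\Omega)(r))=H^{0}((S^{r+1}\Omega_{\P^n})(r+1)|_{H_i})$. La suite conormale $0 \to O_{H_i}(-1) \to \Omega_{\P^n}|_{H_i} \to \Omega_{H_i} \to 0$ induit sur $(S^{r+1}\Omega_{\P^n})(r+1)|_{H_i}$ une filtration de gradu\'es $(S^{k}\Omega_{H_i})(k)$ pour $k=0,\ldots,r+1$~; l'annulation $h^{0}(\P^{m},(S^{k}\Omega_{\P^m})(k))=0$ pour $k\ge 1$ (qui r\'esulte de la filtration de $S^{k}V^{\vee}\otimes O_{\P^m}$ \`a gradu\'es $(S^{j}\Omega)(k)$ et du fait que l'\'evaluation $S^{k}V^{\vee}\to H^{0}(O(k))$ est un isomorphisme) n'y laisse contribuer que le gradu\'e associ\'e \`a $k=0$, soit $O_{H_i}$, d'o\`u $h^{0}=1$. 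Ainsi $\mathrm{Ext}^{1}(\oplus O_{H_i},(S^{r+1}\Omega)(r))\simeq\C^{k}$, et $\mathrm{Aut}(\oplus O_{H_i})=(\C^{*})^{k}$ agit sur cet espace par homoth\'eties ind\'ependantes sur chaque facteur. Comme la section g\'en\'eratrice de $(S^{r+1}\Omega_{\P^n})(r+1)|_{H_i}$ est partout non nulle (elle provient de l'inclusion de sous-fibr\'e $O_{H_i}\hookrightarrow(S^{r+1}\Omega_{\P^n})(r+1)|_{H_i}$ donn\'ee par la filtration), le caract\`ere localement libre de $E$ \'equivaut \`a la non-annulation de chaque composante de la classe d'extension~; les extensions localement libres constituent alors l'unique orbite $(\C^{*})^{k}$, d'o\`u $E\simeq F$.

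L'obstacle principal r\'eside dans la v\'erification soigneuse de l'annulation cohomologique $h^{0}(\P^{m},(S^{k}\Omega_{\P^m})(k))=0$ pour $k\ge 1$, dont d\'ecoule le calcul clef $h^{0}((S^{r+1}\Omega_{\P^n})(r+1)|_{H_i})=1$ et donc toute la partie unicit\'e de l'\'enonc\'e.
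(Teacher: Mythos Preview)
Your proof is correct and follows essentially the same route as the paper's, with one presentational difference worth noting. The paper dualizes the given extension at the outset, turning the problem into one about surjections $S^{r+1}(\Omega_{\P^n}^{\vee}(-1))\twoheadrightarrow\oplus_i O_{H_i}$; it then computes $\mathrm{Hom}(S^{r+1}(\Omega_{\P^n}^{\vee}(-1)),O_{H_i})$ by restricting to $H_i$, using the splitting $T_{\P^n}(-1)|_{H_i}\simeq O_{H_i}\oplus T_{H_i}(-1)$ rather than just a filtration, and invoking the same vanishing $\mathrm{Hom}(S^{j}(\Omega_{H_i}^{\vee}(-1)),O_{H_i})=H^{0}((S^{j}\Omega_{H_i})(j))=0$ that you isolate as the key point. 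Two locally free extensions then correspond to two invertible diagonal matrices in $\mathrm{Hom}(\oplus O_{H_i},\oplus O_{H_i})$, and a commutative diagram identifies the kernels. Your computation of $\mathrm{Ext}^{1}(\oplus O_{H_i},(S^{r+1}\Omega)(r))\simeq\C^{k}$ via the local--global spectral sequence and the orbit argument under $(\C^{*})^{k}$ is the same content, seen from the other side of the duality.

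One small imprecision: your claimed equivalence ``$E$ localement libre $\Leftrightarrow$ chaque composante non nulle'' is, as written, only justified at points lying on a single $H_i$ (where your nowhere-vanishing section argument applies). At a point of $H_i\cap H_j$ one would a priori need the two sections to be linearly independent in the fibre. For the proof you only need the implication ``$E$ localement libre $\Rightarrow$ chaque composante non nulle'' (otherwise $O_{H_i}$ splits off), and then transitivity of the $(\C^{*})^{k}$-action on $(\C^{*})^{k}\subset\C^{k}$ together with the local freeness of $F$ (proposition~\ref{pro31}) finishes. The paper's dualized formulation sidesteps this, since one simply compares two surjections via an invertible diagonal matrix.
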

\begin{rem} \upshape
Lorsque $r=0$, ce sont les fibr\'es logarithmiques de rang $n$ sur $\P^n$.
\end{rem}
\begin{proof}
Comme $\mathcal{E}xt^1(O_{H_i}, O_{\P^n})=O_{H_i}(1)$, on obtient en dualisant l'extension  d\'efinissant $E$ la suite exacte suivante :
$$\begin{CD}
0 @>>> E^{\vee}(-1)  @>>> S^{r+1}\Omega_{\P^n}^{\vee}(-1) @>>> \oplus_{i=1}^{k}O_{H_i}
@>>>  0.
\end{CD}
$$
Pour tout $1\le i\le k$ la fl\`eche surjective 
$\begin{CD}
 S^{r+1}\Omega_{\P^n}^{\vee}(-1) @>>> O_{H_i}
\end{CD}
$
se factorise par la restriction 
$$\begin{CD}
S^{r+1}\Omega_{\P^n}^{\vee}(-1) @>>> S^{r+1}(\Omega_{\P^n}^{\vee}(-1)\otimes O_{H_i}) @>>> O_{H_i}.
\end{CD}
$$
Or ce fibr\'e restreint est 
$$S^{r+1}(\Omega_{\P^n}^{\vee}(-1)\otimes O_{H_i})=O_{H_i}\bigoplus_{1\le j\le r+1}S^{j}(\Omega_{H_i}^{\vee}(-1)).$$
Comme $\mathrm{Hom}(S^{j}(\Omega_{H_i}^{\vee}(-1)), O_{H_i})=0$, on en d\'eduit que 
$$ \mathrm{Hom}(S^{r+1}\Omega_{\P^n}^{\vee}(-1),\oplus_{i=1}^{k}O_{H_i})=
\mathrm{Hom}(\oplus_{i=1}^{k}O_{H_i},\oplus_{i=1}^{k}O_{H_i}). $$ 
Par cons\'equent, deux extensions g\'en\'erales (i.e. deux fibr\'es vectoriels) correspondent \`a deux matrices diagonales $M_{\phi}$, $M_{\psi}$ de rang maximal.
 On en d\'eduit que  le diagramme  suivant commute :
$$
\begin{CD} 
0  @>>>  {\rm ker}(\phi) @>>>    S^{r+1}\Omega_{\P^n}^{\vee}(-1)@>\phi>> 
\oplus_{i=1}^{k}O_{H_i} @>>>    0\\
@. @V{\simeq}VV  @V{\simeq}VV
 @VM_{\psi}M_{\phi}^{-1}VV\\
0  @>>> {\rm ker}(\psi) @>>>   S^{r+1}\Omega_{\P^n}^{\vee}(-1)  @>\psi>>  \oplus_{i=1}^{k}O_{H_i}
 @>>> 0. 
\end{CD}
$$
Ce qui prouve l'unicit\'e du noyau lorsqu'il est un fibr\'e.

\smallskip

On montre  que ce noyau unique est $p_*q^*\mathcal{J}_Z(r+1)$. En effet, l'image par  $p_*q^*$ de la suite exacte ci-dessous
$$\begin{CD}
0 @>>> \mathcal{J}_Z(r+1)  @>>> O_{\P^{n\vee}}(r+1) @>>> O_{Z}@>>> 0
\end{CD}
$$
 donne 
$$\begin{CD}
0 @>>> p_*q^*\mathcal{J}_Z(r+1)  @>>> S^{r+1}\Omega_{\P^n}^{\vee}(-1) @>>> \oplus_{i=1}^{k}O_{H_i}
@>>>  0.
\end{CD}
$$
\end{proof}
En particulier, lorsque $r=0$, on retrouve  le fibr\'e logarithmique $E(Z)$ associ\'e \`a $Z$. Plus pr\'ecis\'ement, 
sous les hypoth\`eses de la proposition \ref{ext}, on a montr\'e :
$$ E(Z)^{\vee}(-1)=p_*q^*\mathcal{J}_Z(1).$$
Dans ce sens, les fibr\'es $p_*q^*\mathcal{J}_Z(r+1)$ pour $r\ge 0$ sont bien une g\'en\'eralisation des fibr\'es logarithmiques et la question sur la biunivocit\'e de la correspondance
 $$ Z \rightsquigarrow p_*q^*\mathcal{J}_Z(r+1) $$
reste  l\'egitime. On notera \`a partir de maintenant $E_{r+1}(Z):=[p_*q^*\mathcal{J}_Z(r+1)]^{\vee}(-1)$.

\section{Construction \'el\'ementaire II}
\label{seconde}
Soient $X\subset \P^{n\vee}$ une courbe int\`egre  non d\'eg\'en\'er\'ee de degr\'e $n+r$. Appelons 
$\overline{q}$ le morphisme de projection de $q^{-1}(X)$ sur $X$ (restriction du morphisme $q : \F\rightarrow \P^{n\vee}$) et $\overline{p} $ celui de $q^{-1}(X)$ sur $\P^n$ (restriction du morphisme $p : \F\rightarrow \P^{n}$). Le morphisme $\overline{p}$ est un rev\^etement de degr\'e $(n+r)$ de  $\P^{n}$.
\begin{pro} 
\label{seconstruction}
Soit  $\mathcal{L}$ un faisceau de rang $1$ sur la courbe  $X$ tel que $h^1(\mathcal{L}(-1))=0$. Alors
$E(X,\mathcal{L}):=\overline{p}_{*}\overline{q}^{*}\mathcal{L}$ est un faisceau  de Steiner de rang \'egal au degr\'e de $X$. 
\end{pro}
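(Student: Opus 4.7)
The plan is to mimic closely the strategy of Proposition \ref{pro31}, but with the role of $\F \subset \P^n \times \P^{n\vee}$ replaced by its restriction $q^{-1}(X) \subset \P^n \times X$. First I would verify that $\overline{p}: q^{-1}(X) \to \P^n$ is a finite surjective morphism of degree $n+r$. Indeed, the scheme-theoretic fiber $\overline{p}^{-1}(x)$ coincides with $X \cap x^{\vee}$; since $X$ is non-dégénérée, $X$ is not contained in the hyperplane $x^{\vee}$, so by Bézout this intersection is zero-dimensional of length $\deg(X) = n+r$. It follows that $R^{i}\overline{p}_{*} \mathcal{F} = 0$ for every coherent sheaf $\mathcal{F}$ on $q^{-1}(X)$ and every $i \ge 1$.

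Next I would restrict the Koszul resolution of $\F$ to $\P^n \times X$ to obtain
$$0 \to O_{\P^n \times X}(-1,-1) \to O_{\P^n \times X} \to O_{q^{-1}(X)} \to 0,$$
tensor with $pr_2^{*}\mathcal{L}$, and push down by $pr_1$. Using the projection formula and Künneth, the associated long exact sequence reads
\begin{align*}
0 &\to H^{0}(\mathcal{L}(-1)) \otimes O_{\P^n}(-1) \to H^{0}(\mathcal{L}) \otimes O_{\P^n} \to \overline{p}_{*}\overline{q}^{*}\mathcal{L} \\
  &\to H^{1}(\mathcal{L}(-1)) \otimes O_{\P^n}(-1) \to H^{1}(\mathcal{L}) \otimes O_{\P^n} \to 0.
\end{align*}
The hypothesis $h^{1}(\mathcal{L}(-1)) = 0$ kills the fourth term. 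Moreover the hyperplane-section exact sequence $0 \to \mathcal{L}(-1) \to \mathcal{L} \to \mathcal{L} \otimes O_{X \cap x^{\vee}} \to 0$ on $X$ (the skyscraper on the right has no $H^1$) makes $H^{1}(\mathcal{L}(-1)) \twoheadrightarrow H^{1}(\mathcal{L})$ surjective, so the hypothesis also forces $h^{1}(\mathcal{L}) = 0$. One is left with the short exact sequence
$$0 \to H^{0}(\mathcal{L}(-1)) \otimes O_{\P^n}(-1) \to H^{0}(\mathcal{L}) \otimes O_{\P^n} \to E(X,\mathcal{L}) \to 0,$$
which is already in the Steiner format.

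It remains to check the numerology. Setting $m := h^{0}(\mathcal{L}(-1))$, Riemann-Roch on the integral curve $X$ gives $h^{0}(\mathcal{L}) - h^{0}(\mathcal{L}(-1)) = \deg O_{X}(1) = n+r$, so $h^{0}(\mathcal{L}) = m + n + r$, matching the required presentation $O_{\P^n}(-1)^{m} \to O_{\P^n}^{m+n+r} \to E(X,\mathcal{L}) \to 0$. The rank of $E(X,\mathcal{L})$ read on a generic fibre of $\overline{p}$ is the length of $\mathcal{L}|_{X \cap x^{\vee}}$, namely $n+r$, consistent with the resolution. The only genuinely delicate point is the finiteness of $\overline{p}$: this is where the non-degeneracy of $X$ is indispensable, but everything downstream is a routine cohomology-and-base-change computation, so I do not expect any essential difficulty beyond this first step.
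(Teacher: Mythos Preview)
Your proposal is correct and follows exactly the paper's approach: finiteness of $\overline{p}$ to kill $R^1\overline{p}_{*}$, the Koszul resolution of $q^{-1}(X)$ in $\P^n\times X$ pushed down to obtain the Steiner presentation, and the rank computation via $h^{0}(\mathcal{L})-h^{0}(\mathcal{L}(-1))=\deg X$. The paper's argument is simply a terser version of yours; in particular your separate check that $h^{1}(\mathcal{L})=0$ is harmless but redundant, since this already falls out of the long exact sequence once $h^{1}(\mathcal{L}(-1))=0$ and $R^{1}\overline{p}_{*}\overline{q}^{*}\mathcal{L}=0$.
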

\begin{rem}\upshape
Ces faisceaux sont des fibr\'es de Steiner lorsque $h^0(\mathcal{L}\otimes O_{x^{\vee}})$ est constant. Ceci impose que le faisceau $\mathcal{L}$ soit inversible mais la courbe peut-\^etre singuli\`ere (exemple \ref{courbesinguilere}). 
Ces fibr\'es g\'en\'eralisent  les fibr\'es de Schwarzenberger qui proviennent des images directes des faisceaux inversibles sur les courbes rationnelles normales.
Arrondo  propose une autre g\'en\'eralisation des fibr\'es de Schwarzenberger (\cite{A}, d\'ef. page 6) qui, aux d\'etails pr\`es, englobe la n\^otre.
\end{rem}
\begin{proof} La dimension relative \'etant nulle on a 
$R^1\overline{p}_{*}\overline{q}^{*}\mathcal{L}=0$.
Par cons\'equent, $\overline{p}_{*}\overline{q}^{*}\mathcal{L}$ est un faisceau de Steiner  sur $\P^n$  avec pr\'esentation :
$$\begin{CD}
0  @>>> H^0(\mathcal{L}(-1))\otimes O_{\P^n}(-1) @>>> 
H^0(\mathcal{L})\otimes O_{\P^n} @>>> \overline{p}_{*}\overline{q}^{*}\mathcal{L} @>>> 0.
\end{CD}
$$
Son  rang est \'egal \`a 
$h^0(\mathcal{L})-h^0(\mathcal{L}(-1))=\mathrm{deg}(X)=n+r.$
\end{proof}
\section{Hyperplans instables}
\label{instables}
%%%%%%%%%%%%%%%%%%%%%%%%%%%%%%%%%%%%%%%%%%%%%%%%%%%%%%%%%%
Soit  $E\in \mathcal{S}_{n,n+r,m}$ un fibr\'e de Steiner. L'ensemble $W(E)$ de ses  hyperplans instables  est (d\'efinition que Dolgachev \'etend aux faisceaux dans  \cite{Do}) :
 $$ W(E):=\lbrace H^{\vee}\in \P^{n\vee}, H^0(E_H^{\vee})\neq 0\rbrace. $$

\smallskip

Pour le d\'eterminer explicitement, on utilisera l'\'egalit\'e  
$ W(E)=\mathrm{supp}(R^{n-1}q_*p^*E(-n))$
qui provient de la  dualit\'e de Serre  $h^0(E_H^{\vee})=h^{n-1}(E_H(-n))$. 

\smallskip

Lorsque $E=O_{\P^n}\oplus F$, ou bien $E=\Omega_{\P^n}^{\vee}(-1)\oplus F$, ou bien encore lorsque 
$m+n+r>mn$, il est clair que $H^0(E_H^{\vee})\neq 0$ pour tout $H^{\vee}\in \P^{n\vee}$. On s'int\'eressera plut\^ot  aux fibr\'es de Steiner  pour lesquels l'hyperplan g\'en\'eral n'est pas instable. 
\begin{pro}
\label{premiercas}
Soit $E\in {\cal S}_{n,n+r,m}$. La codimension attendue du lieu $W(E)$ des hyperplans instables dans $ \P^{n\vee}$ est  $mn-(m+n+r)+1$.
\end{pro}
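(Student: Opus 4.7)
Mon plan consiste \`a interpr\'eter $W(E)$ comme lieu de d\'eg\'en\'erescence d'un morphisme entre fibr\'es vectoriels sur $\P^{n\vee}$, puis \`a appliquer la formule classique donnant la codimension attendue d'une telle vari\'et\'e d\'eterminantielle.

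\smallskip

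La premi\`ere \'etape est locale. En restreignant la r\'esolution de Steiner \`a un hyperplan $H\subset \P^n$ puis en la dualisant, on obtient
$$0 \to E_H^{\vee} \to O_H^{m+n+r} \to O_H(1)^m \to 0.$$
Comme $H^1(O_H)=0$, la suite longue en cohomologie donne $H^0(E_H^{\vee}) = \ker(\phi_H)$, o\`u
$$\phi_H : \C^{m+n+r} \longrightarrow H^0(O_H(1))\otimes \C^m$$
est l'application lin\'eaire induite par la matrice de Steiner \'evalu\'ee sur $H$. Ainsi, $H^{\vee}\in W(E)$ si et seulement si $\phi_H$ n'est pas injective. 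Les dimensions de la source et du but sont respectivement $m+n+r$ et $nm$.

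\smallskip

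La seconde \'etape consiste \`a globaliser cette construction sur $\P^{n\vee}$. En appliquant $q_{*}p^{*}$ \`a la suite duale $0\to E^{\vee} \to O_{\P^n}^{m+n+r} \to O_{\P^n}(1)^m \to 0$ sur $\P^n$, on obtient un morphisme de fibr\'es
$$\Phi : O_{\P^{n\vee}}^{m+n+r} \longrightarrow (q_{*}p^{*}O_{\P^n}(1))\otimes O_{\P^{n\vee}}^{m},$$
o\`u $q_{*}p^{*}O_{\P^n}(1)$ est un fibr\'e vectoriel de rang $n$ sur $\P^{n\vee}$, de fibre $H^0(O_H(1))$ en $H^{\vee}$. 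Les annulations $R^i q_{*}O_{\F}=0$ et $R^i q_{*}p^{*}O_{\P^n}(1)=0$ pour $i>0$ assurent, par changement de base, que la fibre de $\Phi$ en $H^{\vee}$ s'identifie canoniquement \`a $\phi_H$. Par cons\'equent, $W(E)$ co\"\i ncide avec le lieu de non-injectivit\'e de $\Phi$.

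\smallskip

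Il reste alors \`a appliquer la formule classique (voir par exemple Fulton, \emph{Intersection Theory}, ch. 14) : le lieu de non-injectivit\'e d'un morphisme entre fibr\'es vectoriels de rangs $a$ et $b$ (avec $a\le b$) a pour codimension attendue $b - a + 1$. Dans notre situation $a = m+n+r$ et $b = nm$, d'o\`u la valeur annonc\'ee $nm - (m+n+r) + 1$. Le point le plus d\'elicat est la justification rigoureuse du changement de base permettant l'identification des fibres de $\Phi$ avec les $\phi_H$ ; celle-ci d\'ecoule cependant imm\'ediatement des annulations cohomologiques ci-dessus, gr\^ace au fait que les fibres de $q$ sont des $\P^{n-1}$ sur lesquels $O(1)$ et $O$ n'ont de la cohomologie qu'en degr\'e $0$.
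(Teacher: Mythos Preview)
Your proof is correct. Both you and the paper reduce $W(E)$ to a degeneracy locus of a morphism of vector bundles on $\P^{n\vee}$ and then invoke Thom--Porteous, so the overall strategy is the same. The specific realization differs, however: the paper uses the identity $W(E)=\mathrm{supp}\,R^{n-1}q_*p^*E(-n)$ (coming from Serre duality $h^0(E_H^\vee)=h^{n-1}(E_H(-n))$) and presents this sheaf as the cokernel of a map
\[
H^{n-1}(E(-n-1))\otimes O_{\P^{n\vee}}(-1)\longrightarrow H^{n-1}(E(-n))\otimes O_{\P^{n\vee}}
\]
between sums of line bundles, whose non-surjectivity locus has the stated expected codimension. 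You instead push forward the dual sequence directly and obtain a map $O_{\P^{n\vee}}^{m+n+r}\to (q_*p^*O_{\P^n}(1))^m$ into $m$ copies of the rank-$n$ tautological quotient bundle, whose non-injectivity locus is $W(E)$. Your route avoids Serre duality and the computation of $h^{n-1}(E(-n-1))$, at the cost of a non-split target bundle; the paper's route keeps everything in line-bundle sums. Either way the ranks are what matter for the expected codimension, and both give $mn-(m+n+r)+1$.
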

\begin{proof}
Cela r\'esulte de la formule de Thom-Porteous appliqu\'ee \`a la r\'esolution du faisceau $R^{n-1}q_*p^*E(-n)$ :
{\small $$\begin{CD}
 H^{n-1}(E(-n-1))\otimes O_{\P^{n\vee}}(-1) @>>> 
H^{n-1}(E(-n))\otimes O_{\P^{n\vee}} @>>> R^{n-1}q_*p^*E(-n) @>>> 0.
\end{CD}
$$ }
\end{proof}

\begin{exa}
{\rm 
Sur $\P^2$, consid\'erons un fibr\'e de Steiner $E$ poss\'edant une r\'esolution du type :
$$\begin{CD}
0 @>>> O_{\P^2}^{r+2}(-1) @>>> O_{\P^2}^{2r+4} @>>> 
E @>>> 0.
\end{CD}
$$
Quitte \`a choisir $E\in {\cal S}_{2,r+2,r+2}$ assez g\'en\'eral, nous pouvons supposer que 
 $h^0(E_l(-2))=0$ pour $l$ g\'en\'erale. Nous avons alors une suite exacte courte :
$$\begin{CD}
0 @>>> O_{\P^{2\vee}}(-1)^{r+2} @>>> O_{\P^{2\vee}}^{r+2} @>>> 
R^1q_*p^*E(-2) @>>> 0.
\end{CD}
$$
Ceci prouve que $W(E)$ est une courbe de degr\'e $r+2$.}
\end{exa}

\begin{pro}
Soient $Z$ un groupe de points de $\P^{n\vee}$ en position $(r+1)$-g\'en\'erale et $E_{r+1}(Z)$ le logarithmique g\'en\'eralis\'e qui lui est associ\'e. Alors 
$Z\subset W(E_{r+1}(Z)).$ 
\end{pro}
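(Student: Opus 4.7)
L'id\'ee est d'exploiter directement l'extension d\'efinissant $E_{r+1}(Z)$ fournie par la proposition \ref{ext}, et d'en extraire pour chaque point $H_j^{\vee}\in Z$ une section globale non nulle du fibr\'e dual restreint \`a l'hyperplan $H_j\subset \P^n$.

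\emph{Premi\`ere \'etape : restriction \`a $H_j$.} Partant de la suite exacte
$$0 \longrightarrow S^{r+1}\Omega_{\P^n}(r) \longrightarrow E_{r+1}(Z) \longrightarrow \bigoplus_{i=1}^{k} O_{H_i} \longrightarrow 0,$$
on tensorise par $O_{H_j}$. L'exactitude \`a droite du produit tensoriel donne une surjection $E_{r+1}(Z)|_{H_j}\twoheadrightarrow \bigoplus_{i=1}^{k} O_{H_i\cap H_j}$. En la composant avec la projection sur le $j$-\`eme facteur (qui vaut $O_{H_j}$ lui-m\^eme), on obtient une surjection de faisceaux sur $H_j$ :
$$E_{r+1}(Z)|_{H_j}\twoheadrightarrow O_{H_j}.$$

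\emph{Seconde \'etape : dualisation sur $H_j$.} Comme $E_{r+1}(Z)$ est localement libre sur $\P^n$, sa restriction \`a $H_j$ est un fibr\'e vectoriel ; on peut donc dualiser la surjection ci-dessus dans la cat\'egorie des faisceaux sur $H_j$ pour obtenir une injection
$$O_{H_j}\hookrightarrow E_{r+1}(Z)^{\vee}|_{H_j}.$$
La section globale non nulle correspondante montre que $h^0(E_{r+1}(Z)^{\vee}|_{H_j})\ge 1$, c'est-\`a-dire que $H_j^{\vee}\in W(E_{r+1}(Z))$.

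\emph{Commentaire.} Aucun obstacle s\'erieux ne se pr\'esente : la preuve repose uniquement sur la structure canonique de l'extension d\'efinissant $E_{r+1}(Z)$ et sur le fait que la restriction d'un fibr\'e \`a un hyperplan reste localement libre. Le v\'eritable enjeu, qui d\'epasse l'\'enonc\'e vis\'e ici, serait plut\^ot de d\'ecrire compl\`etement le lieu $W(E_{r+1}(Z))$ et de rep\'erer les cas o\`u l'inclusion $Z\subset W(E_{r+1}(Z))$ est stricte, ph\'enom\`ene que l'on s'attend \`a observer lorsque $Z$ est situ\'e sur une courbe de degr\'e $r+2$ dans $\P^{n\vee}$.
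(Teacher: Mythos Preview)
Your argument is correct and amounts to the same thing the paper does: exhibit, for each $H_j^\vee\in Z$, a surjection $E_{r+1}(Z)\twoheadrightarrow O_{H_j}$, then restrict and dualise. The only cosmetic difference is that the paper obtains this surjection by applying $p_*q^*$ to the comparison $0\to\mathcal{J}_Z(r+1)\to\mathcal{J}_{Z\setminus\{H_j^\vee\}}(r+1)\to O_{H_j^\vee}\to 0$ on $\P^{n\vee}$ and then dualising, whereas you read it off directly from the extension of Proposition~\ref{ext}; the resulting quotient map is the same.
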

\begin{rem} \upshape
Nous verrons (th\'eor\`eme \ref{teo}) que sur le plan projectif     $W(E_{r+1}(Z))=Z$ si et seulement si $h^0({\mathcal J}_{Z}(r+2))=0$.
\end{rem}

\begin{proof}
Soit $H^{\vee}\in Z$. La suite exacte sur $\P^{n\vee}$ reliant les faisceaux d'id\'eaux 
$$\begin{CD}
0 @>>> {\mathcal J}_{Z}(r+1) @>>> {\mathcal J}_{Z\setminus \lbrace H^{\vee} \rbrace}(r+1) @>>> O_{H^{\vee}} @>>> 0
\end{CD}
$$
induit par image r\'eciproque et image directe la suite exacte suivante sur $\P^{n}$
$$\begin{CD}
0 @>>> p_{*}q^{*}{\mathcal J}_{Z}(r+1) @>>> p_{*}q^{*}{\mathcal J}_{Z\setminus \lbrace H^{\vee} \rbrace}(r+1) @>>> O_{H} @>>> 0.
\end{CD}
$$
Il suffit alors de  dualiser  cette suite et de la  tensoriser par $O_{\P^{n}}(-1)$ pour v\'erifier que 
$H^{\vee}\in W(p_{*}q^{*}{\mathcal J}_{Z}(r+1))$.
\end{proof}
On a un  r\'esultat de m\^eme nature pour les fibr\'es de Steiner provenant  de faisceaux inversibles sur une courbe. 
\begin{pro}
Soient $X\subset \P^{n\vee}$ une courbe int\`egre  non d\'eg\'en\'er\'ee de degr\'e $n+r$ et $\mathcal{L}$ un faisceau inversible non sp\'ecial sur $X$. 
Alors,  $X\subset W(E(X, \mathcal{L}(1)).$ 
\end{pro}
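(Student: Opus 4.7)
Le plan est de calquer la d\'emonstration de la proposition pr\'ec\'edente : \`a chaque point $H^{\vee}\in X$, je construirai une surjection $E(X,\mathcal{L}(1))\twoheadrightarrow O_H$, puis je conclurai par restriction \`a $H$ et dualit\'e. L'hypoth\`ese de non-sp\'ecialit\'e de $\mathcal{L}$ ne joue qu'un r\^ole indirect : elle sert, via la proposition \ref{seconstruction} appliqu\'ee \`a $\mathcal{L}(1)$ (qui v\'erifie $h^1(\mathcal{L}(1)(-1))=h^1(\mathcal{L})=0$), \`a assurer que $E(X,\mathcal{L}(1))$ est bien un faisceau de Steiner.

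\smallskip

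Concr\`etement, je fixerais $H^{\vee}\in X$ et partirais de la suite exacte sur $X$
$$
\begin{CD}
0 @>>> \mathcal{L}(1)\otimes \mathfrak{m}_{H^{\vee}} @>>> \mathcal{L}(1) @>>> k(H^{\vee}) @>>> 0,
\end{CD}
$$
o\`u $\mathfrak{m}_{H^{\vee}}$ d\'esigne le faisceau d'id\'eaux du point $H^{\vee}$ dans $X$, l'inversibilit\'e de $\mathcal{L}(1)$ permettant d'identifier $\mathcal{L}(1)\otimes k(H^{\vee})$ \`a $k(H^{\vee})$. J'appliquerais ensuite le foncteur $\overline{p}_{*}\overline{q}^{*}$, exact parce que $\overline{q}$ est plat (restriction du $\P^{n-1}$-fibr\'e $q$) et $\overline{p}$ est fini. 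L'ingr\'edient cl\'e est l'\'egalit\'e $\overline{p}_{*}\overline{q}^{*}k(H^{\vee})=O_H$, analogue exact de l'identit\'e $p_{*}q^{*}O_{H^{\vee}}=O_H$ utilis\'ee dans la preuve pr\'ec\'edente : elle r\'esulte de ce que $\overline{q}^{-1}(H^{\vee})$ est envoy\'e isomorphiquement sur l'hyperplan $H\subset \P^n$ par $\overline{p}$. J'obtiendrais ainsi sur $\P^n$ la suite exacte
$$
\begin{CD}
0 @>>> \overline{p}_{*}\overline{q}^{*}(\mathcal{L}(1)\otimes \mathfrak{m}_{H^{\vee}}) @>>> E(X,\mathcal{L}(1)) @>>> O_H @>>> 0.
\end{CD}
$$
La restriction \`a $H$ fournit encore une surjection $E(X,\mathcal{L}(1))_H\twoheadrightarrow O_H$; sa dualis\'ee sur $H$ est une injection $O_H\hookrightarrow E(X,\mathcal{L}(1))_H^{\vee}$, donc une section globale non nulle. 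Ainsi $H^{\vee}\in W(E(X,\mathcal{L}(1)))$, ce qui est exactement ce qu'il faut.

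\smallskip

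L'\'etape la plus d\'elicate sera le contr\^ole du calcul $\overline{p}_{*}\overline{q}^{*}k(H^{\vee})=O_H$ lorsque $H^{\vee}$ est un point singulier de $X$ : il faudra v\'erifier que la fibre sch\'ematique de $\overline{q}$ au-dessus de $H^{\vee}$ reste un $\P^{n-1}$ r\'eduit, ce qui d\'ecoule de ce que $\overline{q}$ demeure un $\P^{n-1}$-fibr\'e (donc plat, de fibres g\'eom\'etriquement r\'eduites) au-dessus de toute la courbe $X$. Ceci acquis, l'ensemble de l'argument est purement formel et fonctoriel.
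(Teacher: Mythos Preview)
Your argument is correct and coincides with the paper's own proof: the author applies $\overline{p}_{*}\overline{q}^{*}$ to the short exact sequence $0\to\mathcal{L}(1-H^{\vee})\to\mathcal{L}(1)\to O_{H^{\vee}}\to 0$ and concludes in one line. Your version is simply a more detailed unpacking of that line, including the identification $\overline{p}_{*}\overline{q}^{*}k(H^{\vee})=O_H$ and the care taken at singular points of $X$ (where your use of $\mathfrak{m}_{H^{\vee}}$ rather than the divisorial notation $1-H^{\vee}$ is in fact slightly cleaner).
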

\begin{proof}
Il est clair, en appliquant le foncteur   $\overline{p}_*\overline{q}^*$ \`a la suite  
$$\begin{CD}
0 @>>> \mathcal{L}(1-H^{\vee}) @>>> \mathcal{L}(1) @>>> O_{H^{\vee}}
 @>>> 0
\end{CD}
$$
que tout point $H^{\vee}$ de la courbe fournit un hyperplan instable  $H$  de $\overline{p}_*\overline{q}^*\mathcal{L}(1)$.
\end{proof}
Dans $\P^{n\vee}$ avec $n\ge 3$, il semble difficile comme l'a aussi remarqu\'e E. Arrondo (\cite{A}, rem. 2.9) de prouver que les deux ensembles co\"{\i}ncident. En effet, on peut avoir une surface d'hyperplans instables pour un fibr\'e  provenant d'un faisceau inversible sur une courbe de cette surface, comme dans l'exemple ci-dessous :
\begin{exa}
{\rm Soient $C$ une courbe  lisse de degr\'e $4$ de $\P^{3\vee}$  et $\mathcal{L}$ un faisceau inversible sur $C$ de degr\'e \'egal \`a $6$. Le fibr\'e de Steiner associ\'e $E(C,\mathcal{L})$ est un fibr\'e de rang $4$ sur $\P^3$ :
$$\begin{CD}
0 @>>> O_{\P^3}^2(-1) @>>> O_{\P^3}^{6} @>>> 
E(C,\mathcal{L}) @>>> 0.
\end{CD}
$$
Si le plan g\'en\'eral n'est pas instable, $W(E(C,\mathcal{L}))$ est une surface quadrique contenant strictement la courbe $C$.}
\end{exa}
Par contre dans le plan projectif, d\`es lors que la droite g\'en\'erale n'est pas instable, il y a \'egalit\'e.
\begin{pro}
\label{droiteinstable}
Soit $E\in \mathcal{S}_{2,r+2,m}$. Supposons que $W(E)\neq \P^{2\vee}$ et que $W(E)$ contienne une courbe $X_{r+2}$ de degr\'e $r+2$. Alors, 
$W(E)=X_{r+2}$.
\end{pro}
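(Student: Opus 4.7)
Let $\mathcal{R}:=R^{1}q_{*}p^{*}E(-2)$, so that $W(E)=\mathrm{supp}(\mathcal{R})$. From the proof of Proposition~\ref{premiercas} this sheaf admits the presentation
\begin{equation*}
O_{\P^{2\vee}}(-1)^{2m-r-2}\xrightarrow{\psi}O_{\P^{2\vee}}^{m}\longrightarrow \mathcal{R}\longrightarrow 0,
\end{equation*}
so that $W(E)$ is the zero locus of the ideal of $m\times m$ minors of $\psi$ (sections of $O_{\P^{2\vee}}(m)$). Because $W(E)\neq\P^{2\vee}$, $\psi$ is generically of maximal rank $m$. Because $X_{r+2}\subset W(E)$ and $\deg X_{r+2}=r+2$, every $m\times m$ minor is divisible by an equation $f\in H^{0}(O_{\P^{2\vee}}(r+2))$ of $X_{r+2}$, i.e.\ factors as $f\cdot g_{I}$ with $g_{I}\in H^{0}(O_{\P^{2\vee}}(m-r-2))$. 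Thus $W(E)=X_{r+2}\cup V((g_{I})_{I})$, and the goal is to show $V((g_{I})_{I})\subset X_{r+2}$.

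To prove this I would restrict to a general line $\lambda\subset\P^{2\vee}$, chosen so that $\lambda\not\subset W(E)$ and so that $\lambda$ meets $X_{r+2}$ transversely in $r+2$ distinct points. On $\lambda\cong\P^{1}$ the restricted morphism $\psi_{\lambda}:O_{\lambda}(-1)^{2m-r-2}\to O_{\lambda}^{m}$ is generically surjective, so $\mathcal{R}|_{\lambda}$ is Artinian and $\ker\psi_{\lambda}$ is a subsheaf of $O_{\P^{1}}(-1)^{2m-r-2}$ of rank $m-r-2$. Since any such subsheaf splits as $\bigoplus_{i}O_{\P^{1}}(d_{i})$ with $d_{i}\leq -1$, an Euler characteristic computation on $\P^{1}$ yields the identity and bound
\begin{equation*}
\ell(\mathcal{R}|_{\lambda})=(2m-r-2)+\deg(\ker\psi_{\lambda})\leq m.
\end{equation*}
On the other hand, the $r+2$ points of $\lambda\cap X_{r+2}$ each lie in $W(E)$ and contribute at least $1$ to $\ell(\mathcal{R}|_{\lambda})$, so $\ell\geq r+2$. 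The key step is to sharpen the upper bound to exactly $r+2$, equivalently to identify $\ker\psi_{\lambda}\cong O_{\P^{1}}(-2)^{m-r-2}$.

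Once $\ell(\mathcal{R}|_{\lambda})=r+2=|\lambda\cap X_{r+2}|$ is established, the support of $\mathcal{R}|_{\lambda}$ coincides set-theoretically with $\lambda\cap X_{r+2}$, hence $W(E)\cap\lambda\subseteq X_{r+2}$. Sweeping $\lambda$ over a general pencil then gives $W(E)\subseteq X_{r+2}$, and combined with the hypothesis $X_{r+2}\subset W(E)$ we obtain the desired equality.

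\textbf{Main obstacle.} The delicate point is the sharp length computation on $\lambda$, or equivalently pinning down the splitting type $\ker\psi_{\lambda}\cong O_{\P^{1}}(-2)^{m-r-2}$. The crude rank bound only gives $\ell\leq m$, which is insufficient when $m>r+2$; one must use in an essential way the geometric meaning of $X_{r+2}\subset W(E)$, namely that each $E_{l}$ with $l^{\vee}\in X_{r+2}$ has a trivial summand $O_{l}$, in order to force the additional drop of $\deg(\ker\psi_{\lambda})$ below the naive value $-(m-r-2)$. This is also precisely why the analogous statement fails in higher dimensions, as noted in Arrondo's remark preceding the statement.
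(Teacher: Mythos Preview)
Your proposal correctly isolates the heart of the matter --- bounding the length of $W(E)\cap\lambda$ by $r+2$ on a generic line $\lambda=x^{\vee}\subset\P^{2\vee}$ --- but it does not close the gap, and in fact the presentation you restrict cannot close it. The map $\psi_{\lambda}:O_{\lambda}(-1)^{2m-r-2}\to O_{\lambda}^{m}$ only gives $\ell(\mathcal R|_{\lambda})\le m$: the $m\times m$ minors are sections of $O_{\lambda}(m)$, and there is no mechanism in this presentation to force the splitting $\ker\psi_{\lambda}\cong O_{\P^1}(-2)^{m-r-2}$. You say this ``must use in an essential way'' that each unstable $l$ gives a trivial summand of $E_{l}$, but you never use it.

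The paper fills exactly this gap by changing the presentation over $\lambda=x^{\vee}$. Instead of restricting the global resolution of $\mathcal R$, it blows up $\P^{2}$ at the point $x$ and pushes the Steiner resolution $0\to E^{\vee}\to O_{\P^{2}}^{m+r+2}\to O_{\P^{2}}(1)^{m}\to 0$ forward along $\tilde q:\tilde\P\to x^{\vee}$. Since $\tilde q_{*}\tilde p^{*}O_{\P^{2}}(1)=O_{x^{\vee}}\oplus O_{x^{\vee}}(1)$, one obtains
\[
M:\;O_{x^{\vee}}^{\,m+r+2}\longrightarrow O_{x^{\vee}}^{\,m}\oplus O_{x^{\vee}}(1)^{\,m},
\]
and $l^{\vee}\in W(E)\cap x^{\vee}$ iff $M$ drops rank at $l^{\vee}$. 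The constant block $O^{m+r+2}\to O^{m}$ is evaluation of the Steiner surjection at $x$, hence surjective; its kernel is $O_{x^{\vee}}^{\,r+2}$, and the residual map $O_{x^{\vee}}^{\,r+2}\to O_{x^{\vee}}(1)^{\,m}$ has $(r+2)\times(r+2)$ minors of degree $r+2$. Thus if $W(E)\cap x^{\vee}$ contains $r+3$ points, every such minor vanishes identically and $x^{\vee}\subset W(E)$. This is precisely the sharp bound you were missing, obtained not by sharpening your Euler-characteristic estimate but by producing a presentation whose Fitting ideal has the correct degree.

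Once this secant lemma is in hand, the proof is the contrapositive of your sweeping argument: a point $l^{\vee}\in W(E)\setminus X_{r+2}$ would make every line through it an $(r+3)$-secant of $W(E)$, hence contained in $W(E)$, contradicting $W(E)\neq\P^{2\vee}$.
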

Cette proposition repose sur le lemme qui suit.
\begin{lem}
\label{secdroite}
Soit $E\in \mathcal{S}_{2,r+2,m}$. On suppose qu'il existe une droite $ x^{\vee}$ de $\P^{2\vee}$ qui est  
$(r+3)$-s\'ecante \`a   $W(E)$.
 Alors,  $x^{\vee} \subset W(E)$.
\end{lem}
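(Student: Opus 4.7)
Mon plan est d'interpr\'eter $W(E)\cap x^{\vee}$ comme le lieu de d\'eg\'en\'erescence d'un morphisme de fibr\'es vectoriels sur $x^{\vee}\simeq\P^1$, puis de conclure par un argument \'el\'ementaire d'annulation de polyn\^omes en une variable.

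D'abord, je consid\'ererais la surface $B=q^{-1}(x^{\vee})$, \'eclatement de $\P^2$ en $x$, munie des projections naturelles $\overline{p}:B\to\P^2$ et $\overline{q}:B\to x^{\vee}$, dont la fibre au-dessus de $\ell^{\vee}$ est la droite $\ell$ passant par $x$. La r\'esolution duale de Steiner $0\to E^{\vee}\to O_{\P^2}^{m+r+2}\to O_{\P^2}^m(1)\to 0$, restreinte \`a $\ell$ et prise en sections globales, montre que $\ell^{\vee}\in W(E)$ si et seulement si l'application $\C^{m+r+2}\to H^0(\ell,O_\ell^m(1))$ n'est pas injective. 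Globalement sur $x^{\vee}$, on obtient ainsi un morphisme
$$\phi : O_{x^{\vee}}^{m+r+2}\longrightarrow \mathcal{B}:=\overline{q}_*\overline{p}^*O_{\P^2}^m(1)$$
dont le lieu de d\'eg\'en\'erescence est $W(E)\cap x^{\vee}$.

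Je calculerais ensuite $\mathcal{B}$ \`a l'aide de la suite de restriction au diviseur exceptionnel $e\simeq x^{\vee}$,
$$0\to O_B(f)\to \overline{p}^*O_{\P^2}(1)\to O_e\to 0,$$
o\`u $f=h-e$ est la classe d'une fibre de $\overline{q}$. Comme $O_B(f)=\overline{q}^*O_{x^{\vee}}(1)$, la pouss\'ee en avant donne la suite exacte canonique
$$0\to O_{x^{\vee}}(1)^m\to \mathcal{B}\to O_{x^{\vee}}^m\to 0,$$
o\`u la surjection est l'\'evaluation en $x$. La compos\'ee $\phi_0:O_{x^{\vee}}^{m+r+2}\to \mathcal{B}\to O_{x^{\vee}}^m$ s'identifie \`a la valeur en $x$ de la matrice duale de Steiner, surjective puisque $E$ est localement libre en $x$. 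Son noyau est un sous-fibr\'e trivial de rang $r+2$, et $\phi$ restreint \`a ce noyau se factorise par $O_{x^{\vee}}(1)^m\subset \mathcal{B}$ en un morphisme
$$\psi : O_{x^{\vee}}^{r+2}\longrightarrow O_{x^{\vee}}(1)^m$$
ayant m\^eme noyau que $\phi$.

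Pour conclure, le lieu de d\'eg\'en\'erescence de $\psi$ est d\'efini par l'annulation commune des $\binom{m}{r+2}$ mineurs maximaux de $\psi$, qui sont des polyn\^omes de degr\'e $r+2$ sur $x^{\vee}\simeq\P^1$. Sous l'hypoth\`ese $(r+3)$-s\'ecante, chacun de ces polyn\^omes a au moins $r+3$ z\'eros, donc est identiquement nul. Par cons\'equent $\psi$ est de rang $\leq r+1$ en tout point, et $x^{\vee}\subset W(E)$. Le point d\'elicat sera la description explicite de la suite canonique pour $\mathcal{B}$, ainsi que la v\'erification de la surjectivit\'e de $\phi_0$, qui repose essentiellement sur le fait que $E$ est un fibr\'e vectoriel en $x$.
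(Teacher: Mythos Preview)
Your proof is correct and follows essentially the same route as the paper: pass to the blow-up $q^{-1}(x^{\vee})$ of $\P^2$ at $x$, push the dual Steiner resolution forward to $x^{\vee}$, and bound the degree of the maximal minors of the resulting map. Your explicit reduction to $\psi:O_{x^{\vee}}^{r+2}\to O_{x^{\vee}}(1)^m$ (using that $E$ is locally free at $x$, so the constant part $\phi_0$ is surjective) is in fact a clarification of the paper's argument: the paper asserts directly that the maximal minors of $M:O_{x^{\vee}}^{m+r+2}\to O_{x^{\vee}}^m\oplus O_{x^{\vee}}(1)^m$ have degree at most $r+2$, which is only true after precisely the reduction you perform.
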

\begin{proof}
Soient   $l_1,\cdots,l_{r+3}$ des droites instables pour $E$, 
concourantes en un point $x\in \P^2$. Consid\'erons l'\'eclatement $\tilde{\P} $ de $\P^2$ le long de $x$ et le diagrammme d'incidence induit :
$$
\begin{CD}
 \tilde{\P} @>\tilde{q}>>    x^{\vee}\\
 @V\tilde{p}VV \\
x\in \P^2
\end{CD}
$$
Appliquons le foncteur $ \tilde{q}_{*} \tilde{p}^{*}$ \`a la suite exacte 
$$\begin{CD}
0@>>> E^{\vee}
@>>>    O^{m+r+2}_{{\P}^2}@>>>O^m_{{\P}^2}(1)
@>>> 0.
\end{CD} 
$$
 Comme $ \tilde{q}_{*} \tilde{p}^{*}O_{{\P}^2}(1)=
\Omega_{{\P}^{2\vee}}^{\vee}(-1)\otimes O_{x^{\vee}}=O_{x^{\vee}}\oplus O_{x^{\vee}}(1)$,
 on obtient un homomorphisme
$$ \begin{CD}
O^{m+r+2}_{x^{\vee}} @>{M}>>
O^m_{x^{\vee}}\oplus O^m_{x^{\vee}}(1).
   \end{CD}
$$
Par hypoth\`ese, les mineurs maximaux de $M$ 
s'annulent tous aux points $l^{\vee}_1,\cdots,l^{\vee}_{r+3} $ de  $x^{\vee}$. Cependant, un mineur maximal, s'il est non nul, est de degr\'e au plus $(r+2)$ ; il s'annule donc  en au plus $r+2$ points de $x^{\vee}$. On en d\'eduit que 
$M$ n'est pas de rang maximal i.e. que son noyau, qui  par fonctorialit\'e ne peut \^etre que 
$\tilde{q}_{*} \tilde{p}^{*}E^{\vee}$, est de rang sup\'erieur ou \'egal \`a $1$. En d'autres termes 
$h^0(E^{\vee}\otimes O_{l})\neq 0 $ pour tout $l^{\vee}\in x^{\vee}$, c'est-\`a-dire $x^{\vee} \subset W(E)$. 

\end{proof}
\begin{proof}[D\'emonstration de la proposition \ref{droiteinstable}]
Supposons qu'il existe un point $l^{\vee}\in W(E)$ et $l^{\vee}\notin X_{r+2}$. Dans $\P^{2\vee}$, toute droite $x^{\vee}$ passant par 
$l^{\vee}$ est  $(r+3)$-s\'ecante \`a $W(E)$. 
D'apr\`es le lemme \ref{secdroite}, $x^{\vee}\subset W(E)$. Ceci contredit l'hypoth\`ese $W(E)\neq \P^{2\vee}$.
\end{proof}

Nous d\'eveloppons, pour terminer cette partie, le cas  des fibr\'es de Steiner de rang $3$ sur $\P^2$ qui poss\`edent une courbe (cubique lisse ou singuli\`ere) de droites instables. 
Nous consid\'erons dans un premier temps (Famille I,  ci-dessous) les fibr\'es $\overline{p}_*\overline{q}^*O_C(n)$ (o\`u $n\ge 2$, $C$ est une cubique lisse et en reprenant les notations de la proposition \ref{seconstruction}). Nous verrons
(prop. \ref{droiteinst}) que l'hypoth\`ese 
$W(E)\neq \P^{2\vee}$ dans la proposition 
\ref{droiteinstable} n'\'etait pas superflue.

\smallskip

Dans un second temps (Famille II, ci-dessous) nous consid\'erons les fibr\'es de Steiner obtenus comme restrictions planes de fibr\'es de Schwarzenberger sur $\P^3$. Ces derniers sont associ\'es \`a une cubique rationnelle normale $C_3\subset \P^{3\vee}$ et les points de $C_3$ sont leurs plans instables (\cite{Va}, prop. 2.2). 
La restriction d'un de  ces fibr\'es \`a un plan g\'en\'eral $H\subset \P^3$ est un fibr\'e de Steiner de rang $3$ sur $H$. Nous montrons (prop. \ref{schproj}) que les droites instables  du fibr\'e restreint sont les duales des points de la cubique singuli\`ere qui est l'image de $C_3$ par la projection de centre $H^{\vee}$.

\medskip

\textbf{Famille I.} Soit une cubique lisse, not\'ee $C\subset \P^{2\vee}$. Consid\'erons le diagramme d'incidence ``point-droite'' au-dessus de $C$ :
$$ \begin{CD}
C @<\overline{q}<< q^{-1}(C) \subset \F  @>q>> \P^{2\vee} \\
 @. @V\overline{p}VpV \\
@. \P^2
\end{CD}
$$
\begin{pro}
\label{droiteinst}   $\overline{p}_*\overline{q}^*O_C(2)=S^2(\Omega_{\P^2}^{\vee}(-1))$,  en particulier toute droite de $\P^2$ est instable pour  $\overline{p}_*\overline{q}^*O_C(2)$. Par contre, pour $n\ge 3$, 
$W(\overline{p}_*\overline{q}^*O_C(n))=C.$
\end{pro}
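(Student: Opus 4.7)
The plan is to begin with the structural short exact sequence on $\P^{2\vee}$ associated to the cubic $C$, namely
$$0 \longrightarrow O_{\P^{2\vee}}(n-3) \stackrel{\cdot F}\longrightarrow O_{\P^{2\vee}}(n) \longrightarrow O_C(n) \longrightarrow 0,$$
where $F\in H^0(O_{\P^{2\vee}}(3))$ is an equation of $C$, then pull back to $\F$ via $q$ and push forward via $p$. The identification $p_*q^*O_{\P^{2\vee}}(k)=S^k\Omega_{\P^{2}}^{\vee}(-1)$ for $k\ge 0$ is the one already used in the proof of Proposition \ref{ext}, and the vanishings $p_*q^*O_{\P^{2\vee}}(-1)=R^1p_*q^*O_{\P^{2\vee}}(-1)=0$ come from $h^*(O_{\P^1}(-1))=0$ on the $p$-fibres.

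For the first assertion, take $n=2$: the pushforward of the above sequence collapses to $E_2:=\overline{p}_*\overline{q}^*O_C(2)\simeq p_*q^*O_{\P^{2\vee}}(2)\simeq S^2\Omega_{\P^{2}}^{\vee}(-1)$. To check that every line of $\P^2$ is unstable for this bundle I restrict its dual to a line $l$: the conormal sequence of $l\subset\P^2$ yields $\Omega_{\P^2}(1)|_l\simeq O_l\oplus O_l(-1)$, so
$$E_2^{\vee}|_l\simeq S^2\Omega_{\P^2}(1)|_l\simeq O_l\oplus O_l(-1)\oplus O_l(-2),$$
hence $h^0(E_2^{\vee}|_l)=1\neq 0$ for every $l$, i.e.\ $W(E_2)=\P^{2\vee}$.

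For the second assertion ($n\ge 3$) the previous proposition gives $C\subset W(E_n)$; since $E_n\in \mathcal{S}_{2,3,3(n-1)}$ and $C$ has degree $3=r+2$ with $r=1$, Proposition \ref{droiteinstable} reduces the claim to producing a single line $l$ with $h^0(E_n^{\vee}|_l)=0$. Pick $l$ with $l^{\vee}\notin C$. The fibre $\overline{p}^{-1}(l)$ is identified with $C$ via $(x,l'^{\vee})\mapsto l'^{\vee}$, and under this identification $\overline{p}|_{\overline{p}^{-1}(l)}$ becomes the linear projection $\pi:C\to l\simeq\P^1$ of centre $l^{\vee}$: indeed $l^{\vee}$, $l'^{\vee}$ and $x^{\vee}$ with $x=l\cap l'$ are collinear in $\P^{2\vee}$. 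Base change along the finite morphism $\overline{p}$ gives $E_n|_l\simeq \pi_*O_C(n)$, and the projection formula together with $\pi^*O_{\P^1}(1)=O_C(1)$ (a fibre of $\pi$ is a hyperplane section $x^{\vee}\cap C$) yields $E_n|_l\simeq (\pi_*O_C)\otimes O_{\P^1}(n)$.

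It remains to determine $\pi_*O_C=O(b_1)\oplus O(b_2)\oplus O(b_3)$. The three numerical constraints
$$\sum b_i=-3,\quad h^0(\pi_*O_C)=h^0(O_C)=1,\quad h^0(\pi_*O_C(-1))=h^0(O_C(-1))=0$$
(the first from $\chi(O_C)=0$, the last since $\deg O_C(-1)=-3$ on the elliptic curve $C$) force $(b_1,b_2,b_3)=(-2,-1,0)$. Consequently
$$E_n^{\vee}|_l\simeq O_l(-n)\oplus O_l(1-n)\oplus O_l(2-n),$$
and for $n\ge 3$ all three summands have degree $\le -1$, whence $h^0(E_n^{\vee}|_l)=0$. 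Proposition \ref{droiteinstable} then gives $W(E_n)=C$. The main obstacle is the geometric identification of the finite base change $\overline{p}^{-1}(l)\to l$ with the projection $\pi$ of centre $l^{\vee}$; once this is secured, both the splitting of $\pi_*O_C$ and the conclusion are purely numerical.
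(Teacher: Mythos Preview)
Your proof is correct. The first assertion ($n=2$) follows the paper's argument essentially verbatim. For $n\ge 3$, however, the paper proceeds differently: it first treats $n=3$ by tensoring the sequence
\[
0\longrightarrow O_{\P^2}\longrightarrow S^3(\Omega_{\P^2}^{\vee}(-1))\longrightarrow \overline{p}_*\overline{q}^*O_C(3)\longrightarrow 0
\]
by $O_l$ and observing that the composite $O_l\to O_l\oplus O_l(1)\oplus O_l(2)\oplus O_l(3)\to O_l$ (projection onto the degree-$0$ summand) is precisely the evaluation of the cubic equation $F$ at $l^{\vee}$; hence $l$ is unstable if and only if $l^{\vee}\in C$. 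The cases $n\ge 4$ are then obtained by monotonicity: an injection $O_C(n)\hookrightarrow O_C(n+1)$ induces $W(\overline{p}_*\overline{q}^*O_C(n+1))\subset W(\overline{p}_*\overline{q}^*O_C(n))$.

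Your route---identifying $\overline{p}^{-1}(l)\to l$ with the linear projection $\pi:C\to\P^1$ of centre $l^{\vee}$ and determining $\pi_*O_C$ numerically---is uniform in $n$ and actually yields the full splitting type $E_n|_l\simeq O_l(n)\oplus O_l(n-1)\oplus O_l(n-2)$ for every $l^{\vee}\notin C$, which is more than the paper extracts. The paper's argument, in exchange, gives a transparent geometric reason (evaluation of the defining cubic) for why the unstable locus is exactly $C$ when $n=3$, and its monotonicity step is a useful general principle. Both proofs ultimately rely on Proposition~\ref{droiteinstable} to conclude.
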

\begin{proof} 
En appliquant le foncteur ${p}_*{q}^*$ \`a la suite exacte  
$$
\begin{CD} 
0@>>> O_{\P^{2\vee}}(-1) @>>> O_{\P^{2\vee}}(2) @>>> 
O_C(2) @>>> 0
\end{CD}$$
et en utilisant le fait que $p_*q^{*}O_{\P^{2\vee}}(-1)=R^1p_*q^{*}O_{\P^{2\vee}}(-1)=0$, on obtient 
l'\'egalit\'e  $\overline{p}_*\overline{q}^*O_C(2)=S^2(\Omega_{\P^2}^{\vee}(-1))$.
En particulier 
toute  droite $l$  du plan est  instable car 
$$S^2(\Omega_{\P^2}^{\vee}(-1))\otimes O_l=O_l\oplus O_l(1)\oplus O_l(2).$$
Lorsque $n\ge 3$, il suffit d'apr\`es la proposition \ref{droiteinstable},  de montrer  qu'une droite g\'en\'erale n'est pas instable
pour le fibr\'e $\overline{p}_*\overline{q}^*O_C(n)$. 
L'application naturelle 
$$H^0(\Omega_{\P^2}^{\vee}(-1)\otimes O_l)\rightarrow H^0(N_{l/\P^2}(-1))=\C$$ n'est rien d'autre que l'\'evaluation des formes lin\'eaires sur $\P^{2\vee}$ au point $l^{\vee}\in \P^{2\vee}$. En prenant la puissance sym\'etrique troisi\`eme, la composition 
$$
\begin{CD} 
 O_{l} @>>> O_l\oplus O_l(1)\oplus O_l(2)\oplus O_l(3) @>>> 
O_l
\end{CD}$$
consiste \`a \'evaluer la cubique $C$ au point $l^{\vee}$. Il appara\^{\i}t, en tensorisant par $O_l$  la suite exacte 
$$
\begin{CD} 
0@>>> O_{\P^{2}} @>>> S^3(\Omega_{\P^2}^{\vee}(-1)) @>>> 
\overline{p}_*\overline{q}^*O_C(3) @>>> 0,
\end{CD}$$
que la droite $l$ est instable si et seulement si la composition 
ci-dessus est nulle. On en d\'eduit que $W(\overline{p}_*\overline{q}^*O_C(3))=C.$

\smallskip

Une injection  $O_C(n)\hookrightarrow O_C(n+1)$ induit sur $\P^2$ une injection 
$\overline{p}_*\overline{q}^*O_C(n) \hookrightarrow \overline{p}_*\overline{q}^*O_C(n+1).$ Ceci prouve l'inclusion  
$W(\overline{p}_*\overline{q}^*O_C(n+1)) \subset W(\overline{p}_*\overline{q}^*O_C(n))$. Ainsi, 
pour tout $n\ge 3$, nous avons montr\'e $ W(\overline{p}_*\overline{q}^*O_C(n))=C$.
\end{proof}

\textbf{Famille II.}
 Soient $n\ge 1$ un entier et  $E_{n+3}(C_3)$ 
le fibr\'e de Schwarzenberger sur $\P(S_3)$ associ\'e \`a un diviseur de degr\'e $n+3$ sur  la cubique gauche 
$C_3\subset \P(S_3^{\vee})$. Il est d\'efini par : 
$$
\begin{CD} 
0@>>> S_n\otimes O_{\P(S_3)}(-1) @>M>> S_{n+3}\otimes O_{\P(S_3)} @>>> 
E_{n+3}(C_3) @>>> 0
\end{CD}.$$
Consid\'erons la restriction de  cette suite exacte \`a un  plan g\'en\'eral $H=\P(S_3/\C)\subset \P(S_3)$. 
Notons $\overline{C_3}$ la cubique singuli\`ere (cuspidale ou \`a point double) image de $C_3$ par  la projection de centre  $H^{\vee}$ et $E_{n+3}(\overline{C_3})$ la restriction du fibr\'e de Schwarzenberger $E_{n+3}(C_3)$ :
$$
\begin{CD} 
0@>>> S_n\otimes O_{\P(S_3/\C)}(-1) @>>> S_{n+3}\otimes O_{\P(S_3/\C)} @>>> 
E_{n+3}(\overline{C_3}) @>>> 0.
\end{CD}$$
\begin{pro}
\label{schproj}
\label{courbesinguilere} Soit  $n\ge 2$ un entier, alors
$W(E_{n+3}(\overline{C_3}))=\overline{C_3}.$
\end{pro}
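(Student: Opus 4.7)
The plan is to apply Proposition~\ref{droiteinstable}. Restricting the Schwarzenberger resolution of $E_{n+3}(C_3)$ on $\P(S_3)$ to the general plane $H$ yields
$$
0 \longrightarrow S_n \otimes \mathcal{O}_H(-1) \longrightarrow S_{n+3} \otimes \mathcal{O}_H \longrightarrow E_{n+3}(\overline{C_3}) \longrightarrow 0,
$$
showing that $E_{n+3}(\overline{C_3}) \in \mathcal{S}_{2,3,n+1}$ with $r+2 = 3 = \deg\overline{C_3}$. It therefore suffices to establish the two inclusions (i) $\overline{C_3} \subset W(E_{n+3}(\overline{C_3}))$ and (ii) $W(E_{n+3}(\overline{C_3})) \neq \P^{2\vee}$.

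For (i), every plane $P \in C_3$ is unstable for $E_{n+3}(C_3)$ on $\P^3$ by \cite[Prop. 2.2]{Va}. Dualizing the Schwarzenberger resolution and restricting separately to $P$ and to any line $l \subset P$ gives a commutative diagram whose horizontal rows are short exact sequences of vector bundles. Passing to global sections, the surjection $H^0(\mathcal{O}_P(1)) \twoheadrightarrow H^0(\mathcal{O}_l(1))$ forces the inclusion of kernels
$$H^0(E_{n+3}(C_3)^\vee|_P) \;\hookrightarrow\; H^0(E_{n+3}(C_3)^\vee|_l).$$
Applied to $l = P \cap H$ (a genuine line, since $H$ general implies $H^\vee \notin C_3$ and thus $P \neq H$), this makes $l$ unstable for $E_{n+3}(\overline{C_3}) = E_{n+3}(C_3)|_H$. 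Moreover the point $l^\vee \in H^\vee$ is the projection of $P$ from $H^\vee$, because the pencil of planes of $\P^3$ containing $l$ is precisely the line in $\P^{3\vee}$ joining $P$ and $H^\vee$. Letting $P$ vary over $C_3$ then yields $\overline{C_3} \subset W(E_{n+3}(\overline{C_3}))$.

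For (ii), I would identify unstable lines directly from the resolution. For a line $l = \P(W') \subset H$ with $W' \subset W \subset S_3$ two-dimensional, dualization and a cohomology computation give
$$H^0(E_{n+3}(\overline{C_3})^\vee|_l) \;=\; (S_{n+3}/S_n \cdot W')^\vee,$$
so $l$ is unstable iff the multiplication $S_n \otimes W' \to S_{n+3}$ fails to be surjective. A standard coprimality count on pencils of binary cubics (the intersection $S_n g_1 \cap S_n g_2$ has dimension $\max(n-2,0)$ when $g_1,g_2$ are coprime) shows that for $n \geq 2$ and $W'$ without common linear factor, the multiplication is surjective. Hence the generic line $l \subset H$ is not unstable and (ii) holds. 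Proposition~\ref{droiteinstable} then concludes $W(E_{n+3}(\overline{C_3})) = \overline{C_3}$.

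The main obstacle is the bookkeeping in step (i): matching the algebraic inclusion $H^0(E_{n+3}(C_3)^\vee|_P) \hookrightarrow H^0(E_{n+3}(C_3)^\vee|_l)$ with the geometric identification $l^\vee = \pi(P)$, and ensuring that restriction of the Schwarzenberger resolution to the line $l$ remains exact (a consequence of local freeness of all sheaves involved). Once these are in place, the argument becomes purely formal and Proposition~\ref{droiteinstable} delivers the equality.
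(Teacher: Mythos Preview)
Your proof is correct and follows the same overall strategy as the paper: verify the two hypotheses of Proposition~\ref{droiteinstable} (a cubic of unstable lines, and a non-unstable general line) and conclude. The execution differs in both steps. For (i), the paper avoids your diagram chase on $H^0$'s by using directly the surjection $E_{n+3}(C_3)\twoheadrightarrow O_{H_{u,v}}$ attached to each unstable plane $H_{u,v}\in C_3$: restricting this surjection to $H$ yields $E_{n+3}(\overline{C_3})\twoheadrightarrow O_{H_{u,v}\cap H}$, which immediately makes the line $H_{u,v}\cap H$ unstable. For (ii), the paper simply invokes the known balanced splitting of $E_{n+3}(C_3)$ on a general line of $\P^3$ (\cite{ST}, prop.~2.18), whereas your direct computation identifying $H^0(E_{n+3}(\overline{C_3})^\vee|_l)$ with $(S_{n+3}/S_n\cdot W')^\vee$ and the coprimality argument is self-contained and does not require that external reference. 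Both routes are short; the paper's (i) is slicker, your (ii) is more explicit.
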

\begin{proof}  
Compte tenu de la d\'ecomposition \'equilibr\'ee de $E_{n+3}(C_3)$ sur une droite g\'en\'erale de $\P^3$ (\cite{ST}, prop. 2.18), une droite g\'en\'erale  de $H$ n'est pas instable pour $E_{n+3}(\overline{C_3})$. 
On sait (\cite{Va}, prop. 2.2) que les plans instables de $E_{n+3}(C_3)$ sont les plans $H_{u,v}$ d'\'equations 
$$ u^3X_0+u^2vX_1+uv^2X_2+v^3X_3=0, \, \, \textrm{pour}\,\, (u,v)\in \P(S_1). $$
L'homomorphisme surjectif $E_{n+3}(C_3) \rightarrow O_{H_{u,v}}$ induit, par restriction \`a $H$,  une surjection 
$$ E_{n+3}(\overline{C_3}) \rightarrow O_{H_{u,v}\cap H}.$$
Par cons\'equent les droites $H_{u,v}\cap H$ du plan $H$, qui sont les droites  duales des points de $\overline{C_3}$,
sont les droites instables de $E_{n+3}(\overline{C_3})$.

\end{proof}

%%%%%%%%%%%%%
%%%%%%%%%%%%%%%%%%%%%%%%%%%%%%%
\section{Th\'eor\`eme de ``type Torelli'' sur le plan projectif}
\label{plan}
Sur le plan projectif, la condition ``position $(r+1)$-g\'en\'erale'' s'exprime en termes de droites $(r+3)$-s\'ecantes. Plus pr\'ecis\'ement, si $Z$ est un  groupe de points de $\P^{2\vee}$, on a d'apr\`es l'\'egalit\'e (\ref{eq}) apparaissant  dans  la preuve de la proposition \ref{pro31} :
$$h^1(\mathcal{J}_Z(r+1)\otimes O_{x^{\vee}})\neq 0 \Leftrightarrow \mid x^{\vee}\cap Z \mid \ge r+3.$$
Par cons\'equent, si $Z$ ne poss\`ede pas de $(r+3)$-s\'ecante, le faisceau 
$p_*q^*\mathcal{J}_Z(r+1)$ est un fibr\'e de rang $(r+2)$ sur $\P^2$.  De plus, si $h^0(\mathcal{J}_Z(r+1))=0$, il est un fibr\'e de Steiner :
{\small
$$\begin{CD}
0 @>>> p_{*}q^{*}{\mathcal J}_{Z}(r+1)@>>> H^{1}({\mathcal J}_{Z}(r))\otimes O_{\P^{n}}(-1) @>>> H^{1}({\mathcal J}_{Z}(r+1))\otimes O_{\P^{n}} @>>> 0.
\end{CD}
$$}
La proposition suivante fait le lien entre droite instable $l$ et courbe passant par
$Z\cup l^{\vee}$.
\begin{pro}
\label{prop-cle}
Soient $Z$ un groupe de points de $\P^{2\vee}$ en position $(r+1)$-g\'en\'erale et  $l^{\vee}\notin Z$. Alors, $l\in W(E_{r+1}(Z))$ si et seulement si 
$h^0(\mathcal{J}_{Z\cup l^{\vee}}(r+2))\neq 0.$
\end{pro}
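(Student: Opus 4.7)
L'id\'ee est de calculer $H^0(l, E_{r+1}(Z)^{\vee}\otimes O_l)$ via la vari\'et\'e d'incidence et de reconna\^{\i}tre dans le r\'esultat $H^0(\mathcal{J}_{Z\cup l^{\vee}}(r+2))$. Par d\'efinition, $E_{r+1}(Z)^{\vee}=p_*q^*\mathcal{J}_Z(r+1)\otimes O_{\P^2}(1)$. En tirant en arri\`ere par $p$ (qui est plat) la suite $0\to O_{\P^2}(-1)\to O_{\P^2}\to O_l\to 0$, en la tensorisant par $q^*\mathcal{J}_Z(r+1)\otimes p^*O_{\P^2}(1)$, puis en poussant par $p$ (avec $R^1p_*q^*\mathcal{J}_Z(r+1)=0$ d'apr\`es la proposition \ref{pro31}), on obtiendra
$$H^0(l, E_{r+1}(Z)^{\vee}\otimes O_l)=H^0(S,\, q^*\mathcal{J}_Z(r+1)\otimes p^*O_{\P^2}(1)),$$
o\`u $S:=p^{-1}(l)\subset\F$.

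On identifiera ensuite g\'eom\'etriquement $S$. Comme une droite $x^{\vee}\subset\P^{2\vee}$ passe par $l^{\vee}$ si et seulement si $x\in l$, la restriction $\sigma:=q|_S:S\to\P^{2\vee}$ est l'\'eclatement de $\P^{2\vee}$ en $l^{\vee}$, et $S$ est la surface de Hirzebruch $\F_1$, munie par $p|_S$ de sa structure de $\P^1$-fibr\'e sur $l\simeq\P^1$. On note $h=\sigma^*O_{\P^{2\vee}}(1)$, $f$ la classe d'une fibre de $p|_S$ et $E_l=h-f$ le diviseur exceptionnel. Alors $q^*\mathcal{J}_Z(r+1)\otimes p^*O_{\P^2}(1)|_S=\sigma^*\mathcal{J}_Z(r+1)\otimes O_S(f)$.

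Le calcul-cl\'e sera celui de $\sigma_*O_S(f)$. De la suite $0\to O_S(-E_l)\to O_S\to O_{E_l}\to 0$ on d\'eduira $\sigma_*O_S(-E_l)=\mathcal{J}_{l^{\vee}}$, d'o\`u $\sigma_*O_S(f)=\sigma_*(\sigma^*O(1)\otimes O_S(-E_l))=\mathcal{J}_{l^{\vee}}(1)$. Comme l'hypoth\`ese $l^{\vee}\notin Z$ garantit que $\sigma$ est un isomorphisme pr\`es de $Z$, le tir\'e en arri\`ere de $0\to\mathcal{J}_Z(r+1)\to O(r+1)\to O_Z\to 0$ reste exact sur $S$. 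En le tensorisant par $O_S(f)$ puis en appliquant $\sigma_*$, la nullit\'e de $R^1\sigma_*O_S(f)$ (cons\'equence de $H^1(\P^1, O(1))=0$ sur la fibre exceptionnelle) fournira la suite exacte
$$0\to \sigma_*(\sigma^*\mathcal{J}_Z(r+1)(f))\to \mathcal{J}_{l^{\vee}}(r+2)\to O_Z\to 0,$$
dont le noyau est pr\'ecis\'ement $\mathcal{J}_{Z\cup l^{\vee}}(r+2)$.

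La suite spectrale de Leray pour $\sigma$ conclura alors
$$H^0(S,\sigma^*\mathcal{J}_Z(r+1)(f))=H^0(\P^{2\vee}, \mathcal{J}_{Z\cup l^{\vee}}(r+2)),$$
d'o\`u l'\'equivalence cherch\'ee. La principale difficult\'e technique sera la v\'erification soigneuse des commutations (formule de projection, platitude de $p$, nullit\'e des $R^1$ pertinents), l'hypoth\`ese $l^{\vee}\notin Z$ jouant un r\^ole essentiel pour toutes les identifications.
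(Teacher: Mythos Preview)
Your approach is correct and essentially identical to the paper's: both identify $S=p^{-1}(l)$ with the blow-up of $\P^{2\vee}$ at $l^{\vee}$ and reduce the instability condition to a section of $\mathcal{J}_{Z\cup l^{\vee}}(r+2)$ via this identification. The only difference is that the paper passes by adjunction (a map $O_l(-1)\hookrightarrow \widehat{p}_*\widehat{q}^*\mathcal{J}_Z(r+1)$ gives $\widehat{p}^*O_l(-1)\hookrightarrow\widehat{q}^*\mathcal{J}_Z(r+1)$, hence a section of $\mathcal{J}_Z(r+1)\otimes\mathfrak{m}_{l^{\vee}}(1)$) while you make the same step explicit through the computation $\sigma_*O_S(f)=\mathcal{J}_{l^{\vee}}(1)$ on $\F_1$.
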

\begin{rem} \upshape
Si le cardinal de $Z$ est strictement plus petit que $\binom{r+4}{2}-1$ il r\'esulte de cette proposition que toute droite du plan est instable. Par ailleurs, il appara\^{\i}t que si $Z$ est contenu dans une courbe de degr\'e $r+2$, alors tout point de cette courbe appartient \`a $W(E_{r+1}(Z))$.
\end{rem}
\begin{proof}
Notons $\widehat{\P}$ l'\'eclatement  de $\P^{2\vee}$ le long du point $l^{\vee}$. Rappelons  que 
$\widehat{\P}\simeq p^{-1}(l)\subset \F$ et consid\'erons le diagramme d'incidence induit :
$$
\begin{CD}
 \widehat{\P} @>\widehat{q}>>    \P^{2 \vee}\\
 @V\widehat{p}VV \\
l
\end{CD}
$$
Comme $Z$ n'a pas de $r+3$ s\'ecante on a 
(on pourra se r\'ef\'erer \`a la prop. 2.1 de \cite{V} pour plus de d\'etails et bien-s\^ur au livre \cite{OSS}) :
$$ p_*q^*\mathcal{J}_Z(r+1)\otimes O_l=\widehat{p}_*\widehat{q}^*\mathcal{J}_Z(r+1).$$
Ainsi la droite $l$ est instable pour le fibr\'e $p_*q^*\mathcal{J}_Z(r+1)$ si et seulement s'il existe un homomorphisme injectif 
$$ O_l(-1)\hookrightarrow \widehat{p}_*\widehat{q}^*\mathcal{J}_Z(r+1).$$
Ceci \'equivaut \`a la donn\'ee d'une application sur $\widehat{\P}$
$$ \widehat{p}^*O_l(-1)\hookrightarrow \widehat{q}^*\mathcal{J}_Z(r+1)$$
ou encore, si l'on note $\mathfrak{m}_{l^{\vee}}$ l'id\'eal maximal du point $l^{\vee}$, \`a la  donn\'ee d'une section non nulle 
$$ O_{\P^{2\vee}}\hookrightarrow \mathcal{J}_Z(r+1)\otimes \mathfrak{m}_{l^{\vee}}(1)=\mathcal{J}_{Z\cup l^{\vee}}(r+2).$$
\end{proof}
On peut reformuler cette proposition en termes de fibr\'es vectoriels de rang deux.
\begin{pro}
\label{fibextension}
Soient $Z$ un groupe de points de $\P^{2\vee}$ en position $(r+1)$-g\'en\'erale et $F$ une extension g\'en\'erale 
$$\begin{CD}
0 @>>> O_{\P^{2\vee}} @>>> F @>>> {\mathcal J}_{Z}(r+2) @>>> 0.
\end{CD}
$$
Alors $E_{r+1}(Z)^{\vee}(-1)=p_{*}q^{*}(F(-1)). $
De plus  $l\in W(E_{r+1}(Z))$ si et seulement s'il existe une section non nulle  de $F$ qui s'annule au point $l^{\vee}$.
\end{pro}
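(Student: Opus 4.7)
Mon plan est de d\'eduire les deux assertions de deux r\'esultats d\'ej\`a \'etablis : la d\'efinition $E_{r+1}(Z)^{\vee}(-1)=p_*q^*\mathcal{J}_Z(r+1)$ pour la premi\`ere, et la proposition \ref{prop-cle} pour la seconde.

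Pour l'isomorphisme, je commencerais par tordre par $O(-1)$ la suite exacte d\'efinissant $F$, ce qui donne
$$0 \longrightarrow O_{\P^{2\vee}}(-1) \longrightarrow F(-1) \longrightarrow \mathcal{J}_Z(r+1)\longrightarrow 0,$$
puis j'appliquerais le foncteur $p_*q^*$. L'ingr\'edient crucial est que $p:\F\rightarrow \P^2$ est un $\P^1$-fibr\'e et que $q^*O_{\P^{2\vee}}(-1)$ se restreint \`a chaque fibre de $p$ comme $O_{\P^1}(-1)$, d'o\`u $p_*q^*O_{\P^{2\vee}}(-1)=0$ et $R^1p_*q^*O_{\P^{2\vee}}(-1)=0$. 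La suite exacte longue fournit alors imm\'ediatement l'isomorphisme voulu $p_*q^*F(-1)\simeq p_*q^*\mathcal{J}_Z(r+1)=E_{r+1}(Z)^{\vee}(-1)$.

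Pour la caract\'erisation des droites instables, la proposition \ref{prop-cle} me ram\`ene \`a prouver l'\'equivalence entre l'existence d'une section non nulle de $F$ s'annulant en $l^{\vee}$ et la non-nullit\'e de $H^0(\mathcal{J}_{Z\cup l^{\vee}}(r+2))$. Comme $H^1(O_{\P^{2\vee}})=0$, le passage aux sections globales dans la suite d\'efinissant $F$ donne la suite exacte courte
$$0\rightarrow H^0(O_{\P^{2\vee}})\rightarrow H^0(F)\rightarrow H^0(\mathcal{J}_Z(r+2))\rightarrow 0,$$
et la section canonique $1$ fournit une section de $F$ partout non nulle ; en particulier $F$ est localement libre au voisinage de $l^{\vee}\notin Z$. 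Je chercherais alors \`a construire une bijection lin\'eaire entre l'espace des sections de $F$ s'annulant en $l^{\vee}$ et l'espace $H^0(\mathcal{J}_{Z\cup l^{\vee}}(r+2))$.

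Le point d\'elicat est pr\'ecis\'ement cette bijection. Toute section $s\in H^0(F)$ s'annulant en $l^{\vee}$ se projette sur une section de $\mathcal{J}_Z(r+2)$ s'annulant en $l^{\vee}$, i.e. sur un \'el\'ement de $H^0(\mathcal{J}_{Z\cup l^{\vee}}(r+2))$ ; l'injectivit\'e de cette application r\'esulte du fait que la section triviale $1$, g\'en\'erateur du noyau de $H^0(F)\to H^0(\mathcal{J}_Z(r+2))$, ne s'annule nulle part. Pour la surjectivit\'e, je rel\`everais une section $\sigma$ de $\mathcal{J}_{Z\cup l^{\vee}}(r+2)$ en $\tilde{\sigma}\in H^0(F)$, puis je modifierais $\tilde{\sigma}$ par un multiple convenable de $1$ de sorte que le rel\`evement s'annule aussi en $l^{\vee}$ ; cet ajustement est possible justement parce que l'\'evaluation de $1$ en $l^{\vee}$ engendre la composante de $F|_{l^{\vee}}$ provenant de $O_{\P^{2\vee}}$, tandis que la projection sur $\mathcal{J}_Z(r+2)|_{l^{\vee}}$ est d\'ej\`a nulle pour $\tilde{\sigma}$.
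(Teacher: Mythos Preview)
Your first part is identical to the paper's. For the second assertion the paper takes a more direct and uniform route: instead of quoting Proposition~\ref{prop-cle}, it reruns the blow-up/adjunction argument of that proposition with $F(-1)$ in place of $\mathcal{J}_Z(r+1)$. From $E_{r+1}(Z)^{\vee}(-1)\otimes O_l=\widehat{p}_*\widehat{q}^*F(-1)$ one gets that $l$ is instable iff there is an injection $O_l(-1)\hookrightarrow \widehat{p}_*\widehat{q}^*F(-1)$, which by adjunction amounts to a nonzero section of $F(-1)\otimes\mathfrak{m}_{l^{\vee}}(1)=F\otimes\mathfrak{m}_{l^{\vee}}$. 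This works for every $l^{\vee}$, with no case distinction.

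Your route via Proposition~\ref{prop-cle} plus elementary section-counting is valid in spirit, but two points need repair. First, Proposition~\ref{prop-cle} is stated only for $l^{\vee}\notin Z$, and you silently restrict to that case; you should also dispatch $l^{\vee}\in Z$, which is immediate once one observes that the canonical section $1$ actually vanishes there and that $Z\subset W(E_{r+1}(Z))$. Second, and related, your claim that $1$ is ``partout non nulle'' is false: since $F$ is a rank~$2$ bundle (this is what ``extension g\'en\'erale'' buys, not the existence of $1$) and $\mathcal{J}_Z(r+2)\otimes k(z)$ is $2$-dimensional at each $z\in Z$, the fibre map $O\otimes k(z)\to F\otimes k(z)$ is zero, so the zero locus of $1$ is exactly $Z$. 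What your bijection really needs, and what is true, is only that $1(l^{\vee})\neq 0$ when $l^{\vee}\notin Z$; this suffices for both the injectivity and the adjustment step in your surjectivity argument.
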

\begin{proof}
Comme  $p_*q^{*}O_{\P^{2\vee}}(-1)=0$ et $R^1p_*q^{*}O_{\P^{2\vee}}(-1)=0$,   on a bien 
$E_{r+1}(Z)^{\vee}(-1)=p_{*}q^{*}(F(-1))$. L'existence d'une droite instable $l$ fournit une application injective 
$$ O_l(-1)\hookrightarrow \widehat{p}_*\widehat{q}^*F(-1).$$
Ceci \'equivaut \`a la donn\'ee d'une appplication sur $\widehat{\P}$
$$ \widehat{p}^*O_l(-1)\hookrightarrow \widehat{q}^*F(-1)$$
ou encore, si l'on note $\mathfrak{m}_{l^{\vee}}$ l'id\'eal maximal du point $l^{\vee}$, \`a la  donn\'ee d'une section non nulle 
$$ O_{\P^{2\vee}}\hookrightarrow F(-1)\otimes \mathfrak{m}_{l^{\vee}}(1).$$
On peut remarquer que  si $l^{\vee}\notin Z$ on aura $h^0(F)\ge 2$, c'est-\`a-dire $h^0({\mathcal J}_{Z}(r+2))\neq 0$.
\end{proof}

\begin{thm}
\label{teo}
Soient $Z$ et $Z^{'}$ deux groupes de points de $\P^{2\vee}$ de m\^eme longueur et en position $(r+1)$-g\'en\'erale. On suppose  $E_{r+1}(Z)\simeq E_{r+1}(Z^{'})$. Alors un des deux cas suivants se produit :\\
1) $Z=Z^{'}$.\\
2) $Z$ et $Z^{'}$ sont sur une m\^eme courbe  $X_{r+2}$ de degr\'e $r+2$ et il existe un faisceau $\mathcal{L}$ de rang $1$ sur $X_{r+2}$ tel que $E_{r+1}(Z)\simeq E(X_{r+2}, \mathcal{L}).$
\end{thm}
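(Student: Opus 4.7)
The plan is to recover $Z$ from $E_{r+1}(Z)$ by analyzing the instable locus $W(E_{r+1}(Z))$ via the characterization of Proposition \ref{prop-cle}, splitting according to whether $Z$ lies on a degree-$(r+2)$ curve.

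\emph{First case: $h^0(\mathcal{J}_Z(r+2))=0$.} For any $l^{\vee}\notin Z$, a fortiori $h^0(\mathcal{J}_{Z\cup l^{\vee}}(r+2))=0$, so Proposition \ref{prop-cle} rules out $l$ as an instable line. Combined with the inclusion $Z\subset W(E_{r+1}(Z))$ already obtained in Section \ref{instables}, this forces $W(E_{r+1}(Z))=Z$ as a set. The hypothesis $E_{r+1}(Z)\simeq E_{r+1}(Z')$ then yields $Z'\subset W(E_{r+1}(Z'))=Z$, and equality of cardinals closes this case.

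\emph{Second case: $h^0(\mathcal{J}_Z(r+2))\ge 1$.} Pick a curve $X_{r+2}$ of degree $r+2$ containing $Z$. The remark following Proposition \ref{prop-cle} gives $X_{r+2}\subset W(E_{r+1}(Z))$. As long as $W(E_{r+1}(Z))\neq \P^{2\vee}$, Proposition \ref{droiteinstable} upgrades this to the equality $W(E_{r+1}(Z))=X_{r+2}$, so that $Z'\subset W(E_{r+1}(Z'))=X_{r+2}$. The delicate subcase --- which I expect to be the main obstacle --- is $W(E_{r+1}(Z))=\P^{2\vee}$, equivalent to $h^0(\mathcal{J}_Z(r+2))\ge 2$; here I would exploit the fact that the pencil $|H^0(\mathcal{J}_Z(r+2))|$ is recoverable from $E_{r+1}(Z)$ through the rank-two extension of Proposition \ref{fibextension} (hence equals the analogous pencil for $Z'$), and select a common $X_{r+2}$ in that linear system containing both $Z$ and $Z'$.

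Once $Z,Z'\subset X_{r+2}$, the final identification $E_{r+1}(Z)\simeq E(X_{r+2},\mathcal{L})$ proceeds from the short exact sequence
$$0\to\mathcal{O}_{\P^{2\vee}}(-1)\to\mathcal{J}_Z(r+1)\to\mathcal{J}_{Z/X_{r+2}}(r+1)\to 0$$
given by multiplication by an equation of $X_{r+2}$. Applying $p_*q^*$ and invoking the vanishings $p_*q^*\mathcal{O}_{\P^{2\vee}}(-1)=R^1p_*q^*\mathcal{O}_{\P^{2\vee}}(-1)=0$ (as used in the proof of Proposition \ref{droiteinst}) produces an isomorphism
$$p_*q^*\mathcal{J}_Z(r+1)\simeq \overline{p}_*\overline{q}^*\mathcal{J}_{Z/X_{r+2}}(r+1),$$
which, after the dual-twist defining $E_{r+1}(Z)$, matches the generalized Schwarzenberger construction of Proposition \ref{seconstruction} for a rank-one sheaf $\mathcal{L}$ on $X_{r+2}$ built from $\mathcal{J}_{Z/X_{r+2}}(r+1)$. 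The same calculation applied to $Z'$ gives $E_{r+1}(Z')\simeq E(X_{r+2},\mathcal{L}')$, and the assumption $E_{r+1}(Z)\simeq E_{r+1}(Z')$ ensures $\mathcal{L}\simeq \mathcal{L}'$, concluding conclusion (2).
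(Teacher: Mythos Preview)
Your core mechanism coincides with the paper's: once a degree-$(r+2)$ curve $X_{r+2}$ through $Z$ is produced via Proposition~\ref{prop-cle}, the short exact sequence $0\to O_{\P^{2\vee}}(-1)\to \mathcal{J}_Z(r+1)\to \mathcal{L}\to 0$ together with the vanishings $p_*q^*O_{\P^{2\vee}}(-1)=R^1p_*q^*O_{\P^{2\vee}}(-1)=0$ gives $p_*q^*\mathcal{J}_Z(r+1)\simeq \overline{p}_*\overline{q}^*\mathcal{L}$. That is exactly the paper's argument.

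Where you diverge is in the surrounding architecture. The paper does \emph{not} split according to $h^0(\mathcal{J}_Z(r+2))$, nor does it invoke Proposition~\ref{droiteinstable} or worry about whether $W(E_{r+1}(Z))=\P^{2\vee}$. It argues in one stroke: if $Z\neq Z'$, choose $l^{\vee}\in Z'\setminus Z\subset W(E_{r+1}(Z))$, apply Proposition~\ref{prop-cle} to obtain $X_{r+2}\supset Z$, and run the exact-sequence computation. In particular, the paper's proof stops after identifying $E_{r+1}(Z)\simeq E(X_{r+2},\mathcal{L})$ and does not spell out the inclusion $Z'\subset X_{r+2}$; your extra case analysis is aimed at precisely that point, and your first case together with the subcase $W\neq\P^{2\vee}$ handle it cleanly. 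Your ``delicate subcase'' $h^0(\mathcal{J}_Z(r+2))\ge 2$, however, is not settled: recovering the linear system $|H^0(\mathcal{J}_Z(r+2))|$ from $E_{r+1}(Z)$ via the bundle $F$ of Proposition~\ref{fibextension} would require knowing that $F$ (or at least its space of sections modulo the distinguished one) is determined by $E_{r+1}(Z)$ alone, and that is not established anywhere. Finally, your closing claim $\mathcal{L}\simeq\mathcal{L}'$ is neither needed for conclusion~(2) nor justified---the assignment $\mathcal{L}\mapsto E(X_{r+2},\mathcal{L})$ has no stated injectivity.
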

\begin{proof}
Si $Z\neq Z^{'}$ il existe au moins une droite $l$ telle que $l^{\vee}\notin Z$ et pourtant  $l^{\vee}\in W(E_{r+1}(Z))$. 
D'apr\`es la proposition \ref{prop-cle}, il existe une courbe de degr\'e $r+2$ contenant $Z$. Soient $X_{r+2}$ une telle courbe et $\lbrace f=0 \rbrace$ son \'equation. Elle induit  la suite exacte suivante :
$$\begin{CD}
0 @>>> O_{\P^{2\vee}}(-1) @>f>> \mathcal{J}_Z(r+1) @>>> \mathcal{L}
 @>>> 0
\end{CD}
$$
o\`u $ \mathcal{L}$ est  donn\'e par la restriction du faisceau d'id\'eaux $\mathcal{J}_Z(r+1)$ \`a la courbe $X_{r+2}$.
Comme $p_*q^{*}O_{\P^{2\vee}}(-1)=0$ et $R^1p_*q^{*}O_{\P^{2\vee}}(-1)=0$ on en d\'eduit que 
$$ p_*q^*\mathcal{J}_Z(r+1)=\overline{p}_*\overline{q}^*\mathcal{L}.$$
Ce qui prouve le th\'eor\`eme.

\end{proof}

%%%%%%%%%%

\bigskip

Laboratoire de Math\'ematiques Appliqu\'ees
de Pau\\
Universit\'e de Pau et des Pays de l'Adour\\
 Avenue de l'Universit\'e\\
 64000 Pau (France)\\
 {email:  jean.valles@univ-pau.fr } 

\end{document}